\title{Degenerations of amoebae and Berkovich spaces}
\author{Mattias Jonsson}
\address{Dept of Mathematics\\
  University of Michigan\\
  Ann Arbor, MI 48109-1043\\
  USA}
\email{mattiasj@umich.edu}
\date\today
\thanks{Partially supported by NSF grants 
  DMS-1001740 and DMS-1266207.}
\subjclass[2010]{Primary: 14T05, Secondary: 32A60, 32P05, 14M25}
\numberwithin{equation}{section}       
\theoremstyle{plain}
\newtheorem{Thm}{Theorem}[section]
\newtheorem{Prop}[Thm]{Proposition}
\newtheorem{Lemma}[Thm]{Lemma}
\newtheorem{Prop-def}[Thm]{Proposition-Definition}
\newtheorem*{ThmA}{Theorem A} 
\newtheorem*{ThmAp}{Theorem A$'$} 
\newtheorem*{ThmB}{Theorem B} 
\newtheorem*{ThmC}{Theorem C} 
\newtheorem*{ThmCp}{Theorem C$'$}
\theoremstyle{definition}
\newtheorem{Def}[Thm]{Definition}
\newtheorem*{Ackn}{Acknowledgment}
\theoremstyle{remark}
\newtheorem{Rmk}[Thm]{Remark}
\newcounter{alphasect}
\def\alphainsection{0}
\let\oldsection=\section
\def\section{%
  \ifnum\alphainsection=1%
\e    \addtocounter{alphasect}{1}
  \fi%
\oldsection}%
\renewcommand\thesection{%
 \ifnum\alphainsection=1%
   \Alph{alphasect}%
 \else
   \arabic{section}%
 \fi%
}%
\newcommand{\A}{{\mathbf{A}}}
\newcommand{\C}{{\mathbf{C}}}
\newcommand{\G}{{\mathbf{G}}}
\renewcommand{\P}{{\mathbf{P}}}
\newcommand{\Q}{{\mathbf{Q}}}
\newcommand{\R}{{\mathbf{R}}}
\newcommand{\Z}{{\mathbf{Z}}}
\newcommand{\Rbar}{\overline{\R}}
\newcommand{\bm}{{\bar{m}}}
\newcommand{\scO}{{\mathscr{O}}}
\newcommand{\cA}{{\mathcal{A}}}
\newcommand{\cH}{{\mathcal{H}}}
\newcommand{\cL}{{\mathcal{L}}}
\newcommand{\cM}{{\mathcal{M}}}
\newcommand{\cO}{{\mathcal{O}}}
\newcommand{\cX}{{\mathcal{X}}}
\newcommand{\cY}{{\mathcal{Y}}}
\newcommand{\ocX}{\overline{\mathcal{X}}}
\newcommand{\fa}{{\mathfrak{a}}}
\newcommand{\fp}{{\mathfrak{p}}}
\newcommand{\simto}{\overset\sim\to}
\renewcommand{\a}{\alpha}
\renewcommand{\b}{\beta}
\newcommand{\e}{\varepsilon}
\newcommand{\f}{\varphi}
\newcommand{\eg}{e.g.\ }
\newcommand{\loccit}{\textit{loc.\,cit.\ }}
\newcommand{\qand}{{\quad\text{and}\quad}}
\newcommand{\Uan}{{U^\mathrm{an}}}
\newcommand{\UAn}{{U^\mathrm{An}}}
\newcommand{\Xan}{{X^\mathrm{an}}}
\newcommand{\XAn}{{X^\mathrm{An}}}
\newcommand{\Xtrop}{{X^\mathrm{trop}}}
\newcommand{\XTrop}{{X^\mathrm{Trop}}}
\newcommand{\cXAn}{{\mathcal{X}^\mathrm{An}}}
\newcommand{\cXsharp}{{\mathcal{X}^{\sharp}}}
\newcommand{\cXtrop}{{\mathcal{X}^\mathrm{trop}}}
\newcommand{\Yan}{{Y^\mathrm{an}}}
\newcommand{\YAn}{{Y^\mathrm{An}}}
\newcommand{\Ytrop}{{Y^\mathrm{trop}}}
\newcommand{\YTrop}{{Y^\mathrm{Trop}}}
\newcommand{\an}{\operatorname{an}}
\newcommand{\An}{\operatorname{An}}
\newcommand{\Hom}{\operatorname{Hom}}
\newcommand{\Spec}{\operatorname{Spec}}
\newcommand{\trop}{\operatorname{trop}}
\newcommand{\Trop}{\operatorname{Trop}}
\newcommand{\cro}[1]{[\![#1]\!]}
\newcommand{\lau}[1]{(\!(#1)\!)}
\begin{document}
\begin{abstract}
  We prove a continuity result for the fibers of the Berkovich 
  analytification of a complex algebraic variety with respect to
  the maximum of the Archimedean norm and the trivial norm.
  As a consequence, we obtain generalizations of a result 
  of Mikhalkin and Rullg{\aa}rd  about degenerations of 
  amoebae onto tropical varieties.
\end{abstract}

\maketitle
%
%
%
%
%
%
\section*{Introduction}
Let $X\subset(\C^*)^n$ be an algebraic subvariety of the
$n$-dimensional complex algebraic torus.
The \emph{amoeba} $A_X\subset\R^n$ of $X$ is the image of $X$
under the map 
\begin{equation*}
  \cL\colon(\C^*)^n\to\R^n
\end{equation*}
defined by\footnote{We use negative signs to match the standard
convention for valuations.  All logarithms are natural logarithms.} 
$\cL=(-\log|z_1|,\dots,-\log|z_n|)$,
where $(z_1,\dots,z_n)$ are coordinates on $(\C^*)^n$.
See Figure~\ref{F101} for a picture of the amoeba of
$X=\{z_1+z_2+1=0\}$.

More generally, let $(K,|\cdot|)$ be any complete 
valued field and let $X\subset K^{*n}$ be an algebraic variety.
For any valued field extension $L/K$, let 
$X_L\subset L^{*n}$ be the base change.
Define the \emph{tropicalization} of $X$ to be the subset 
$\Xtrop\subset\R^n$ defined by 
\begin{equation*}
  \Xtrop=\bigcup_L\cL(X_L),
\end{equation*}
where $L$ ranges over all valued field extensions of $K$
and $\cL\colon(L^*)^n\to\R^n$ is defined using the same 
formula as above.

For example, suppose $K=\C$.
If $|\cdot|_\infty$ is the usual Archimedean norm on $\C$,
then $(\C,|\cdot|_\infty)$ does not admit any 
nontrivial valued field extensions, so $\Xtrop=A_X$ in this case.
On the other hand, we can also equip $\C$ with the 
\emph{trivial norm} $|\cdot|_0$, for which $|a|_0=1$ for all $a\in\C^*$.
Then the tropicalization of
$X$ is equal to the cone over the \emph{logarithmic limit set} of $X$
introduced in~\cite{Bergman}.
The case $X=\{z_1+z_2+1=0\}$ is depicted to the right in Figure~\ref{F101}.
We see that the tropicalization looks like the 
large scale limit of the amoeba.
This is a general fact:
\begin{ThmA}
  The large scale limit of the amoeba $A_X$ equals the tropicalization of~$X$:
  \begin{equation*}
    \lim_{\rho\to{0+}}\rho\cdot A_X
    =\Xtrop,
  \end{equation*}  
  where the tropicalization is computed using the trivial norm 
  on $\C$. 
\end{ThmA}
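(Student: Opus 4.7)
The plan is to reinterpret the scaled amoebae $\rho\cdot A_X$ as tropicalizations of $X$ with respect to a family of norms on $\C$ that varies continuously with $\rho\in[0,1]$, and then deduce the theorem from the continuity-of-fibers result for the hybrid Berkovich analytification advertised in the abstract.

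For $\rho\in(0,1]$, the absolute value $|\cdot|_\rho:=|\cdot|_\infty^\rho$ is equivalent to the usual Archimedean norm on $\C$, so by the same reasoning the excerpt gives for $\rho=1$ the tropicalization of $X$ with respect to $|\cdot|_\rho$ coincides with its amoeba; tracing the definitions, that amoeba is $\cL_\rho(X)=\rho\cdot A_X$, where $\cL_\rho=-\log|\cdot|_\rho$. At $\rho=0$ the norm $|\cdot|_\rho$ degenerates to the trivial norm, whose associated tropicalization is exactly $\Xtrop$ in the sense of the theorem. Thus Theorem A amounts to a Kuratowski continuity statement for this one-parameter family of tropicalizations at $\rho=0$.

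Next I would assemble these norms into the hybrid Berkovich space that underlies the paper. The Berkovich spectrum $\scM(\C,\max(|\cdot|_\infty,|\cdot|_0))$ is identified with $[0,1]$ via $\rho\mapsto|\cdot|_\rho$, and the analytification $\XAn$ of $X$ over this hybrid Banach ring admits a continuous structure map $\pi\colon\XAn\to[0,1]$ whose fiber $\pi^{-1}(\rho)$ is the usual Berkovich analytification of $X$ over $(\C,|\cdot|_\rho)$. The tropicalization map $\Trop\colon\XAn\to\R^n$ is continuous and on each fiber recovers the tropicalization with respect to $|\cdot|_\rho$; so $\Trop(\pi^{-1}(\rho))=\rho\cdot A_X$ for $\rho>0$ and $\Trop(\pi^{-1}(0))=\Xtrop$.

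Granting this setup, I would prove both halves of the Kuratowski limit. For $\limsup_{\rho\to0+}\rho\cdot A_X\subseteq\Xtrop$, take sequences $\rho_k\to 0$ and $w_k\in\rho_k A_X$ with $w_k\to w$, lift to $x_k\in\pi^{-1}(\rho_k)$ with $\Trop(x_k)=w_k$, and extract a limit $x_\infty$ from a suitable compactness property of $\Trop^{-1}$ of a neighborhood of $w$ over the compact base $[0,1]$; continuity of $\pi$ and $\Trop$ then force $x_\infty\in\pi^{-1}(0)$ with $\Trop(x_\infty)=w$, giving $w\in\Xtrop$. For $\Xtrop\subseteq\liminf_{\rho\to0+}\rho\cdot A_X$, write $w=\Trop(x_\infty)$ for some $x_\infty\in\pi^{-1}(0)$ and produce approximating points $x_\rho\in\pi^{-1}(\rho)$ with $x_\rho\to x_\infty$ as $\rho\to 0+$; continuity of $\Trop$ then yields $\Trop(x_\rho)\in\rho\cdot A_X$ converging to $w$.

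The main obstacle is the second inclusion: one must show that every point of the special fiber $\pi^{-1}(0)$ is a limit of points in the nearby generic fibers $\pi^{-1}(\rho)$, $\rho>0$. This density/approximation property is precisely the substance of the continuity-of-fibers theorem that the paper is built around; granting it, the proof of Theorem A reduces to the packaging sketched above, and the serious work lies entirely in establishing that Berkovich-theoretic input.
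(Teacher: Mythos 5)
Your proposal follows essentially the same route as the paper: reinterpret $\rho\cdot A_X$ as the fiber over $\rho$ of the tropicalization of the hybrid analytification $\XAn\to[0,1]$, get upper semicontinuity from properness/closedness of the tropicalization map, and get lower semicontinuity at $\rho=0$ from the openness of $\lambda\colon\XAn\to[0,1]$ (the paper's Theorem~C), which you correctly isolate as the substantive input. The packaging, including the identification of the fibers and the reduction to Kuratowski continuity, matches the paper's proof of Theorem~A$'$.
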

Here 
$\rho\cdot A_X:=\{\rho\cdot v\mid v\in A_X\}$ for $\rho\in\R_+^*$
and the limit can be understood, for example, in the sense of
Kuratowski convergence. When $X$ is a hypersurface, 
Theorem~A is a special case of a result by Rullg{\aa}rd and 
Mikhalkin; see below. The general case of Theorem~A is 
proved in~\cite{Bergman} in a slightly different language and 
conditional on a conjecture that was later 
establied in~\cite{BieriGroves}.

\begin{figure}
  \includegraphics[width=4cm]{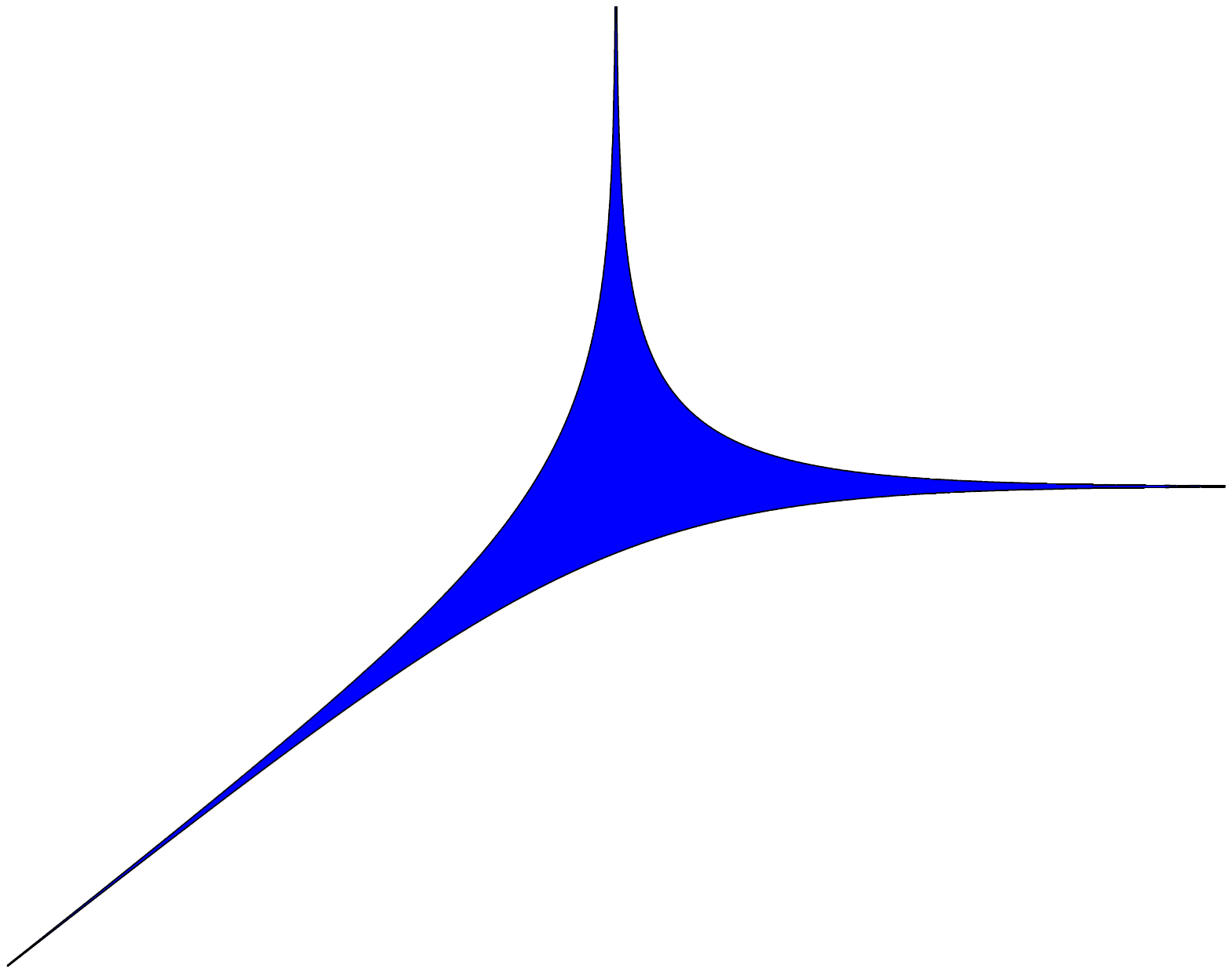}
  \hspace*{1cm}
  \includegraphics[width=4cm]{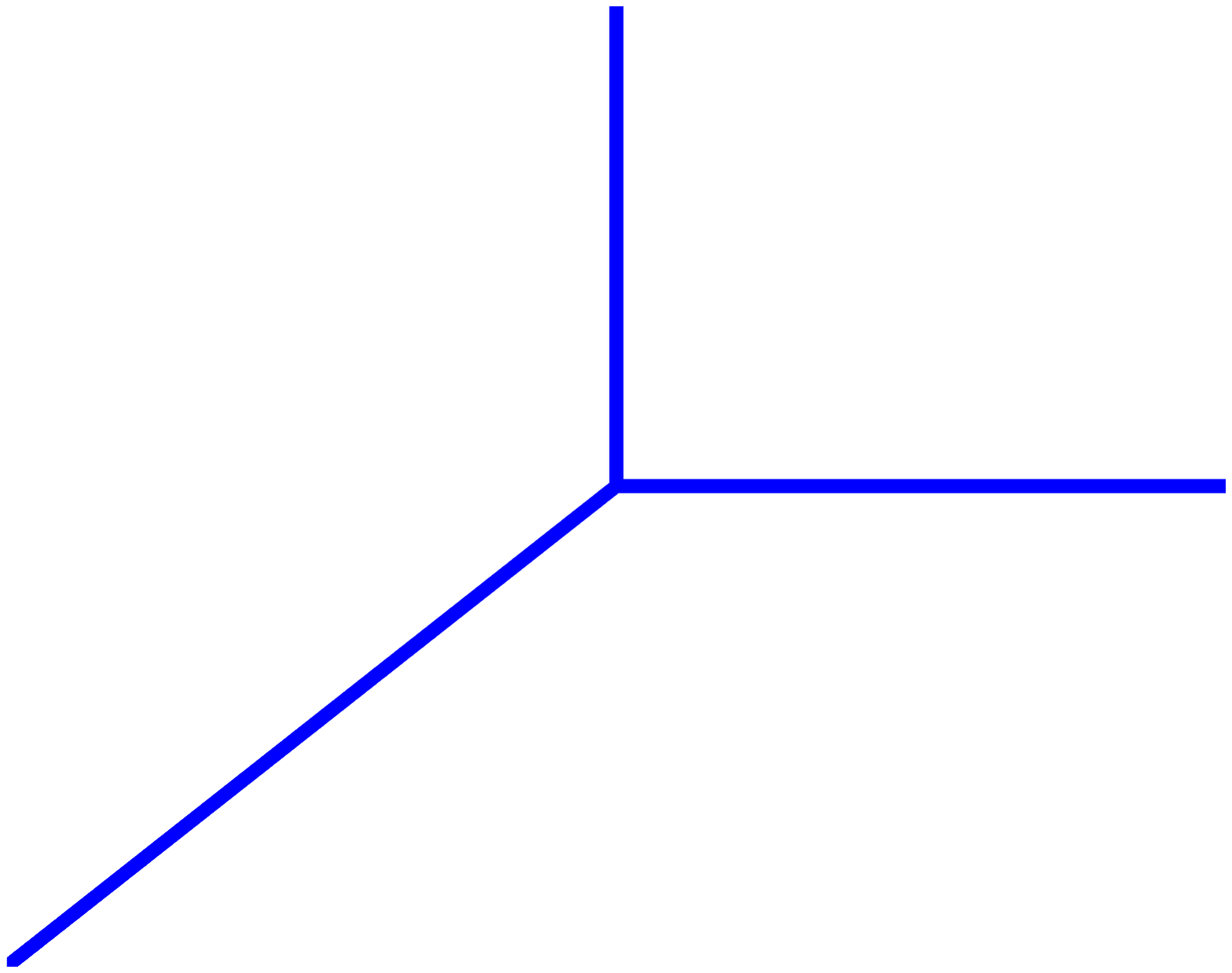}
  \caption{The amoeba and the tropicalization of the curve $z_1+z_2+1=0$
    in $\C^{*2}$.}\label{F101}
\end{figure}

As a more global version, consider a (complex) toric variety $Y$. 
There is a natural topological space $\Ytrop$ 
canonically associated to $Y$, see~\S\ref{S102}.
If $Y=\C^{*n}$, then $\Ytrop=\R^n$; in general $\Ytrop$
contains $\R^n$ as an open dense subset and comes with 
a multiplicative action by $\R_+^*$ extending the usual action on $\R^n$.

The two absolute values on $\C$ above define two different tropicalization
maps of $Y$ onto $\Ytrop$. If $X$ is an algebraic subvariety of $Y$,
let $A_X$ and $\Xtrop$ denote the images of $X$ in $\Ytrop$
under these two maps. When $Y$ is projective, $A_X$ is 
homeomorphic to the \emph{compactified amoeba} defined in~\cite{GKZ}.
\begin{ThmAp}
  We have $\lim_{\rho\to{0+}}\rho\cdot A_X=\Xtrop$.
\end{ThmAp}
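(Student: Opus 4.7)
The plan is to derive Theorem~A$'$ from Theorem~A together with the orbit stratification of $\Ytrop$ and a compactness/continuity argument furnished by the Berkovich analytification of $Y$ with respect to a hybrid of the Archimedean and trivial norms on $\C$.

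First I would use the canonical decomposition of the toric variety into torus orbits, $Y=\bigsqcup_\sigma O_\sigma$, and the induced stratification $\Ytrop=\bigsqcup_\sigma O_\sigma^{\mathrm{trop}}$, where each $O_\sigma^{\mathrm{trop}}$ is a real affine space. Applying Theorem~A orbit by orbit to $X\cap O_\sigma\subset O_\sigma$ yields
\[
\lim_{\rho\to 0^+}\rho\cdot A_{X\cap O_\sigma}=(X\cap O_\sigma)^{\mathrm{trop}}
\]
inside $O_\sigma^{\mathrm{trop}}$. Combined with the standard identity $\Xtrop=\bigcup_\sigma (X\cap O_\sigma)^{\mathrm{trop}}$, expressing compatibility of tropicalization with orbits, this gives the \emph{inner-limit} inclusion $\Xtrop\subseteq\liminf_{\rho\to 0^+}\rho\cdot A_X$ in the Kuratowski sense.

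The reverse \emph{outer-limit} inclusion $\limsup_{\rho\to 0^+}\rho\cdot A_X\subseteq\Xtrop$ is the real content and is where I would invoke the main continuity theorem announced in the abstract. Equip $\C$ with the family of norms $|\cdot|_\rho:=\max(|\cdot|_\infty^\rho,|\cdot|_0)$ for $\rho\in[0,1]$; this interpolates continuously between the trivial norm at $\rho=0$ and a rescaled Archimedean norm for $\rho>0$, and organizes the Berkovich analytifications of $X$ into a family over $[0,1]$. Tropicalization sends the $\rho$-fiber of $\XAn$ to a subset of $\Ytrop$ equal, up to the dilation $\rho\cdot$, to $A_X$ when $\rho>0$ and to $\Xtrop$ when $\rho=0$. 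Given a sequence $v_n\in\rho_n\cdot A_X$ with $\rho_n\to 0^+$ and $v_n\to v$ in the compact space $\Ytrop$, compactness of the Berkovich fibers together with the continuity of the family let me lift $v_n$ to a convergent sequence in the total analytic space whose limit lies in the $\rho=0$ fiber of $\XAn$, forcing $v\in\Xtrop$.

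The main obstacle is this last step: the continuity theorem must be applied to the closed subset $\XAn$ of $\YAn$, so that the analytic limit stays on $X$ and not merely on $Y$. A secondary technicality is handling points of $\rho\cdot A_X$ that migrate into the boundary strata $\Ytrop\setminus\R^n$ as $\rho\to 0^+$: Theorem~A alone cannot detect these, but they are precisely what the global compactification $\Ytrop$ records, and one must verify that the orbit-wise limits glue correctly to form the global Kuratowski limit.
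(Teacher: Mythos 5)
Your outer-limit (upper semicontinuity) argument is essentially the paper's: lift $(\rho_n,v_n)$ to points of $\XAn$, use properness of $\Trop\colon\YAn\to\YTrop$ (Proposition~\ref{P101}) to extract a limit point, and use closedness of $\XAn$ in $\YAn$ to see that the limit lies over $X$ and tropicalizes into $\Xtrop$. Two corrections there. First, $\Ytrop$ is compact only when $Y$ is proper; in general the compactness you need is that of $\Trop^{-1}(K)$ for $K$ a compact neighborhood of $(0,v)$, which is exactly what Proposition~\ref{P101} supplies. Second, this half does \emph{not} use the main continuity theorem (Theorem~C, openness of $\lambda$): by Lemma~\ref{L101}, openness of the projection governs \emph{lower} semicontinuity, while upper semicontinuity follows from closedness of $\XTrop$ in $\YTrop$, i.e., from properness of $\Trop$. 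So as you have structured the proof, Theorem~C is never actually invoked.

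The genuine divergence is in the inner limit, and it is also where the structural problem sits. The paper obtains lower semicontinuity from Theorem~C, proved by showing that maximal-rational-rank points are dense in the central fiber (Lemma~\ref{L102}), realizing them as monomial points after a modification (Lemma~\ref{L103}), and exhibiting explicit polycircles $Z_\rho$ in the Archimedean fibers inside any neighborhood. You instead reduce to Theorem~A applied to $X\cap O_\sigma$ on each torus orbit. The orbit compatibilities you invoke --- $\Xtrop\cap O_\sigma^{\trop}=(X\cap O_\sigma)^{\trop}$, $A_X\cap O_\sigma^{\trop}=A_{X\cap O_\sigma}$, and equivariance of the $\R_+^*$-action --- are correct, and granting Theorem~A the reduction does yield the inner limit. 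But within this paper Theorem~A is \emph{deduced from} Theorem~A$'$ (it is the special case $Y=\C^{*n}$), so your argument is circular unless you import the Bergman--Bieri-Groves proof of Theorem~A as an external input; and even then you have outsourced exactly the hard analytic content --- producing amoeba points near every tropical point --- that Theorem~C is designed to supply. A self-contained proof still requires an argument for the dense torus orbit, and that argument is Theorem~C (or an equivalent).
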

Note that the notation is somewhat abusive since both $A_X$ and $\Xtrop$
depend on the embedding of $X$ in a toric variety $Y$.
Theorem~A is the special case of Theorem~A$'$
when $Y$ is the algebraic torus.

\smallskip
New we consider one-parameter families of subvarieties.
Let $\cX\subset\C^*\times Y$ be a closed algebraic subvariety 
such that the projection of $\cX$ onto the first factor 
$\C^*$ is surjective. 
Write $X_a\subset Y$ for the fiber of $\cX$ above $a\in\C^*$
and $A_{X_a}\subset\Ytrop$ for the amoeba as in Theorem~A$'$.
Define $\cXtrop$ as the tropicalization of the base change
$\cX\times_{\G_m}\Spec\C\lau{t}$, where 
$\G_m=\Spec\C[t^{\pm1}]\simeq\C^*$ and 
the field $\C\lau{t}$ of formal Laurent series is equipped
with the usual non-Archimedean absolute value for which $|t|=e^{-1}$.
\begin{ThmB}
  We have $\lim_{a\to0}(\log{|a|^{-1}})^{-1}\cdot A_{X_a}=\cXtrop$.
\end{ThmB}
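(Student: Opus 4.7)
The plan is to realize both sides of Theorem~B as the $\trop$-images of two fibers of a hybrid Berkovich family over $\G_m$, and then to reduce the desired limit to convergence of fibers inside that family. Equip $\C$ with the Banach norm $|\cdot|_{\mathrm{hyb}}=\max(|\cdot|_\infty,|\cdot|_0)$, whose Berkovich spectrum is canonically homeomorphic to $[0,1]$ via $\epsilon\mapsto|\cdot|_\infty^\epsilon$, with $\epsilon=0$ corresponding to the trivial norm. This promotes $\cX$ and $\G_m=\Spec\C[t^{\pm1}]$ to hybrid analytifications $\cX^{\an}$ and $\G_m^{\an}$ fibered continuously over $[0,1]$, and produces two continuous maps: the structure map $\pi\colon\cX^{\an}\to\G_m^{\an}$ and the tropicalization $\trop\colon\cX^{\an}\to\Ytrop$.

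The crucial step is to identify two particular fibers of $\pi$ with the objects appearing in the statement. For $a\in\C^*$ with $|a|_\infty<1$, set $\epsilon:=(\log|a|^{-1})^{-1}\in(0,1]$ and let $x_{a,\epsilon}\in\G_m^{\an}$ be the point given by the multiplicative seminorm $f\mapsto|f(a)|_\infty^\epsilon$ on $\C[t^{\pm1}]$. Its fiber $\pi^{-1}(x_{a,\epsilon})$ is canonically the Berkovich analytification of $X_a$ over $(\C,|\cdot|_\infty^\epsilon)$, and since $-\log|\cdot|_\infty^\epsilon=\epsilon\cdot(-\log|\cdot|_\infty)$, the image of this fiber under $\trop$ is $\epsilon\cdot A_{X_a}=(\log|a|^{-1})^{-1}\cdot A_{X_a}$. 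On the other side, the point $\xi\in\G_m^{\an}$ sitting over $\epsilon=0$ given by the $t$-adic valuation on $\C\lau{t}$ normalized so $|t|=e^{-1}$ has fiber equal to the non-Archimedean analytification of $\cX\times_{\G_m}\Spec\C\lau{t}$, whose $\trop$-image is exactly $\cXtrop$ by the definition in the statement.

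Next I would check that $x_{a,\epsilon}\to\xi$ in $\G_m^{\an}$ as $a\to 0$. For any $f=\sum_n c_n t^n\in\C[t^{\pm1}]$ with lowest-order monomial $c_{n_0}t^{n_0}$, one computes
\begin{equation*}
  -\log|f|_{x_{a,\epsilon}}
  =-\epsilon\log\bigl|c_{n_0}a^{n_0}(1+O(|a|))\bigr|_\infty
  =n_0+\epsilon\log|c_{n_0}|^{-1}+o(1)
  \longrightarrow n_0
  =-\log|f|_\xi,
\end{equation*}
using the normalization $\epsilon\log|a|^{-1}=1$. With the convergence $x_{a,\epsilon}\to\xi$ in hand, the paper's main continuity theorem for fibers of the hybrid analytification guarantees that $\pi^{-1}(x_{a,\epsilon})$ converges to $\pi^{-1}(\xi)$ in a suitable (Kuratowski) sense, and composing with the continuous map $\trop$ transports this to the required convergence $(\log|a|^{-1})^{-1}\cdot A_{X_a}\to\cXtrop$ in $\Ytrop$.

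The main obstacle is absorbed into the continuity theorem itself; granted that, what remains is the technical verification that Kuratowski convergence of fibers in $\cX^{\an}$ passes through $\trop$ to Kuratowski convergence of their images in $\Ytrop$, which follows from properness of the tropicalization map for subvarieties of a toric $Y$, exactly as in the proof of Theorem~A$'$.
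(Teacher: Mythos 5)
Your overall architecture is the right one and matches the paper's: realize the scaled amoebae and $\cXtrop$ as the $\trop$-images of fibers of the structure map over $\G_m$ in the hybrid analytification, check that the base points converge to the $t$-adic point, and then push Kuratowski convergence of fibers through the proper map $\trop$. The fiber identifications and the computation showing $x_{a,\epsilon}\to\xi$ in $\G_m^{\an}$ are correct, and the final reduction (closedness of the image from properness of $\Trop$ for usc, openness of the projection for lsc) is exactly how the paper concludes.

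The gap is in the sentence ``the paper's main continuity theorem for fibers of the hybrid analytification guarantees that $\pi^{-1}(x_{a,\epsilon})$ converges to $\pi^{-1}(\xi)$.'' The continuity theorem for a single variety (Theorem~C) asserts openness of $\lambda\colon\cXAn\to[0,1]$, i.e.\ continuity of the fibers $\lambda^{-1}(\rho)$, which for the family $\cX$ are the full analytifications of $\cX$ over $(\C,|\cdot|_\infty^\rho)$ --- not the fibers $X_a$ of $p$. What you actually need is lower semicontinuity of $a\mapsto X_a^{\sharp}$ at $a=0$, equivalently openness of the restricted map $p^\sharp\colon\cXsharp\to\Delta_{e^{-1}}$ near the central fiber; this is the paper's Theorem~C$'$, and it does not follow formally from Theorem~C (the paper notes the implication goes the other way: C$'$ implies C for product families). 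Its proof requires genuinely new input beyond the single-variety case: generic flatness of $p$ plus Douady's theorem to get openness over the punctured disc; Nagata compactification and resolution so that a dense set of maximal-rational-rank points of $X_0^{\sharp}$ acquire centers on a smooth proper model; monomialization so that $t=u\prod_i z_i^{b_i}$ with $\sum_i b_i\a_i=1$ in suitable coordinates at the center; and then estimates as in the proof of Theorem~C, but now along the polycircles $Z_a=X_a^{\sharp}\cap\{|z_i|=e^{-\a_i}\}$ sitting inside the fibers of $p$ rather than of $\lambda$. Without establishing this relative openness, the limit statement is not proved.
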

See Figure~\ref{F103} for an illustration of Theorem~B in the case
$Y=\C^{*2}$ and $\cX=\{z_1+z_2+t=0\}\subset\C^*\times Y$,
and Figure~3 for the same situation with $Y=\P^2$.
When $\cX=\C^*\times X$ is a product, Theorem~B reduces to 
Theorem~A$'$.

\smallskip
Theorem~B is due to Rullg{\aa}rd~\cite[Thm.~9]{Rul01} and
Mikhalkin~\cite[Cor~6.4]{Mik04a} in the case when $Y=\C^{*n}$ and 
$\cX\subset\C^*\times\C^{*n}$ is a hypersurface;
see also~\cite[Thm.~7.1]{Tei08}. 
The proofs in \textit{loc.\ cit.}\ use the characterization of $\Xtrop$ as
the locus where the tropicalization of the Laurent polynomial defining 
$X$ fails to be affine.
The approach in~\cite{Mik04a} also emphasizes the analogy
with the patchworking construction of Viro~\cite{Viro}.
As these proofs show, 
the scaled amoebae in fact converge to the tropicalization in the 
Hausdorff metric; see also~\cite{AKNR13}.

The higher codimension case of Theorem~B for $Y=\C^{*n}$ 
is stated without proof in~\cite[Thm.~1.4]{IMS09}.
I have not been able to locate a proof nor the 
general version of Theorem~B in the literature.
At least in the case $Y=\C^{*n}$, one may in principle reduce Theorem~B to the
hypersurface case, using, on the one hand, Artin's Approximation
Theorem together with the approach in~\cite[p.112]{Tei08} 
and, on the other hand, the fact that the tropicalization of a
subvariety is the intersection of the tropicalizations of finitely
many hypersurfaces containing the subvariety. However, the latter fact is
quite nontrivial, with incomplete proofs appearing in the literature:
a correct argument can be found by 
combining~\cite{BJSST} and~\cite{CartwrightPayne}, or in~\cite{MS15}.

\begin{figure}
  \includegraphics[width=3.6cm]{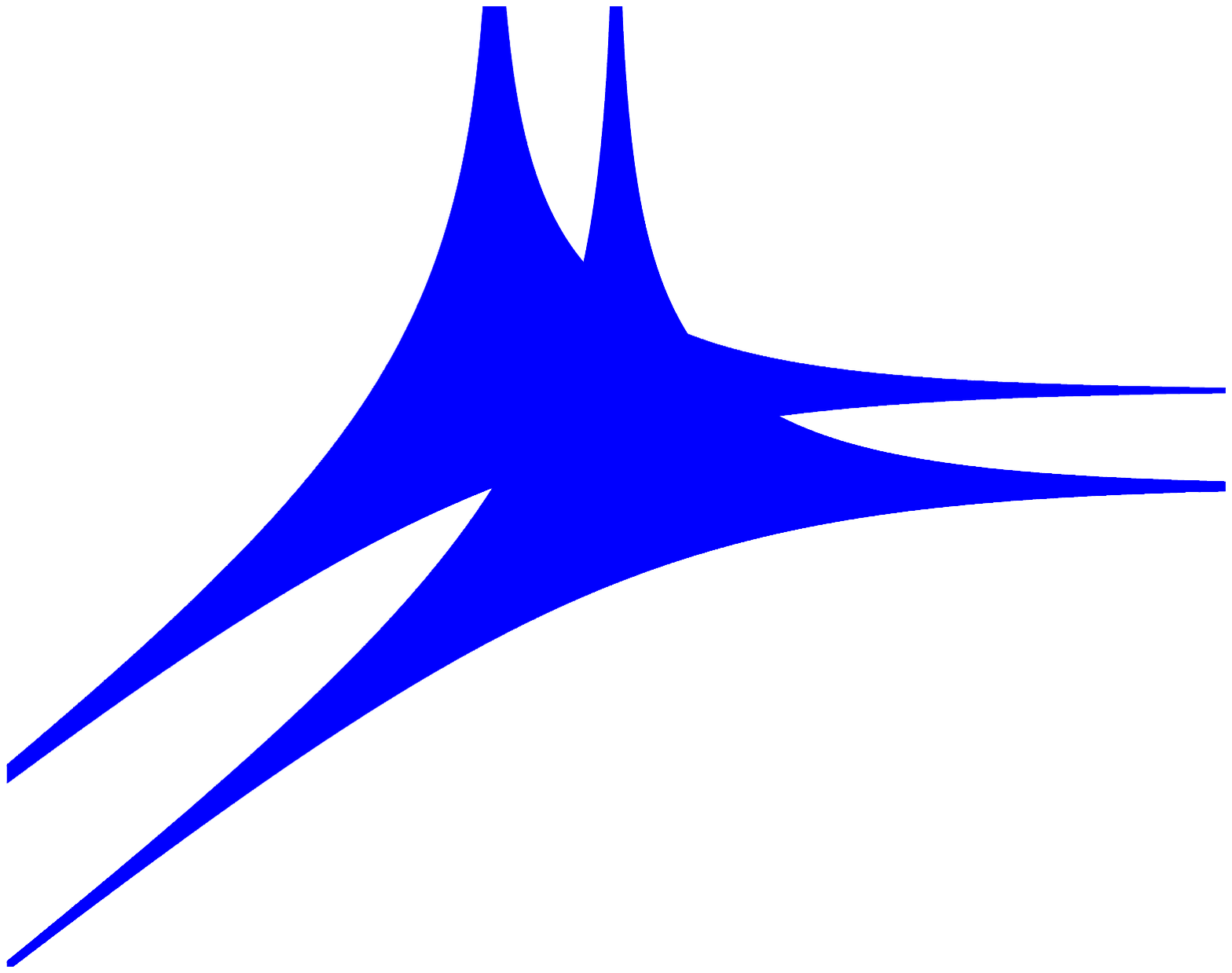}
  \includegraphics[width=3.6cm]{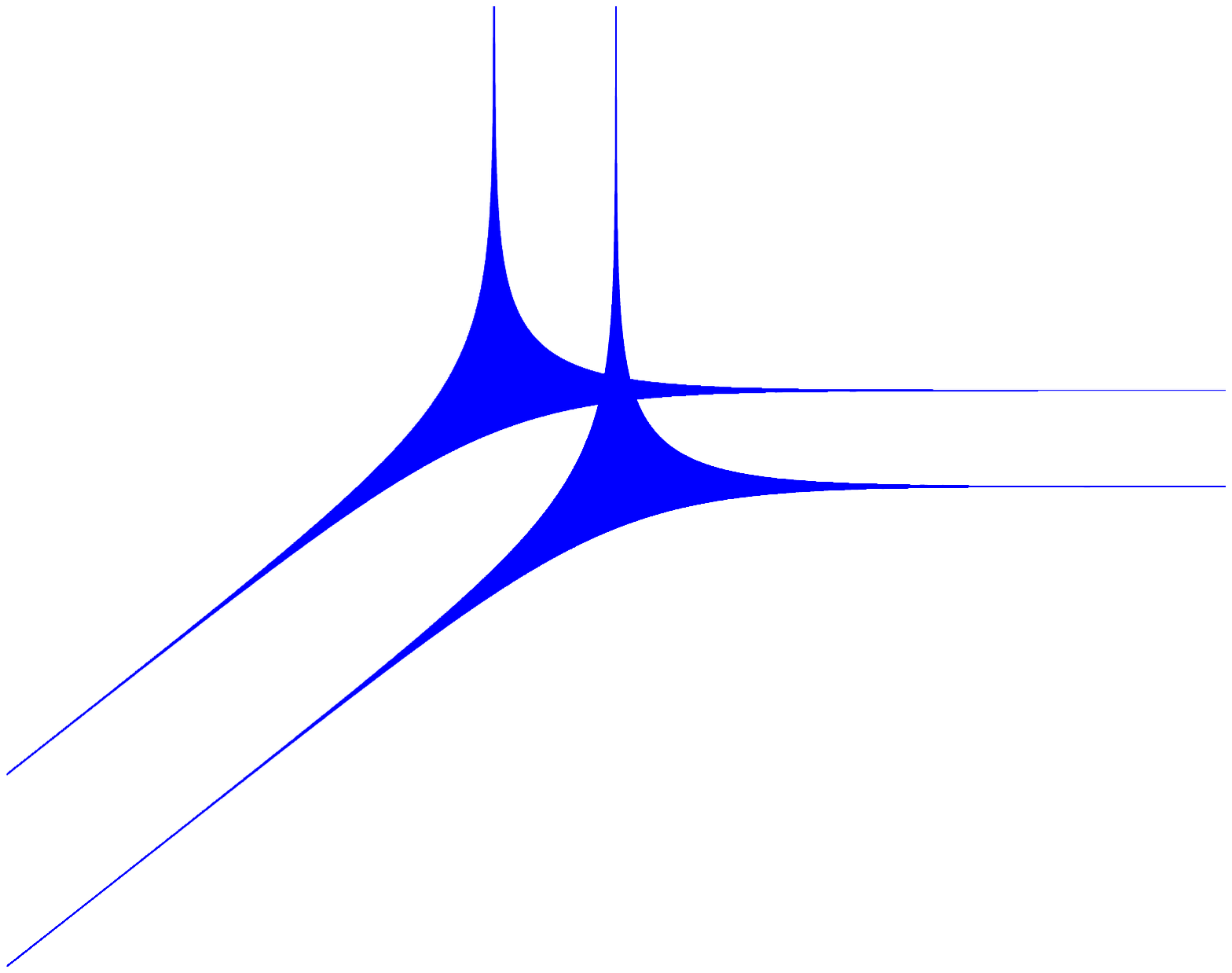}
  \includegraphics[width=3.6cm]{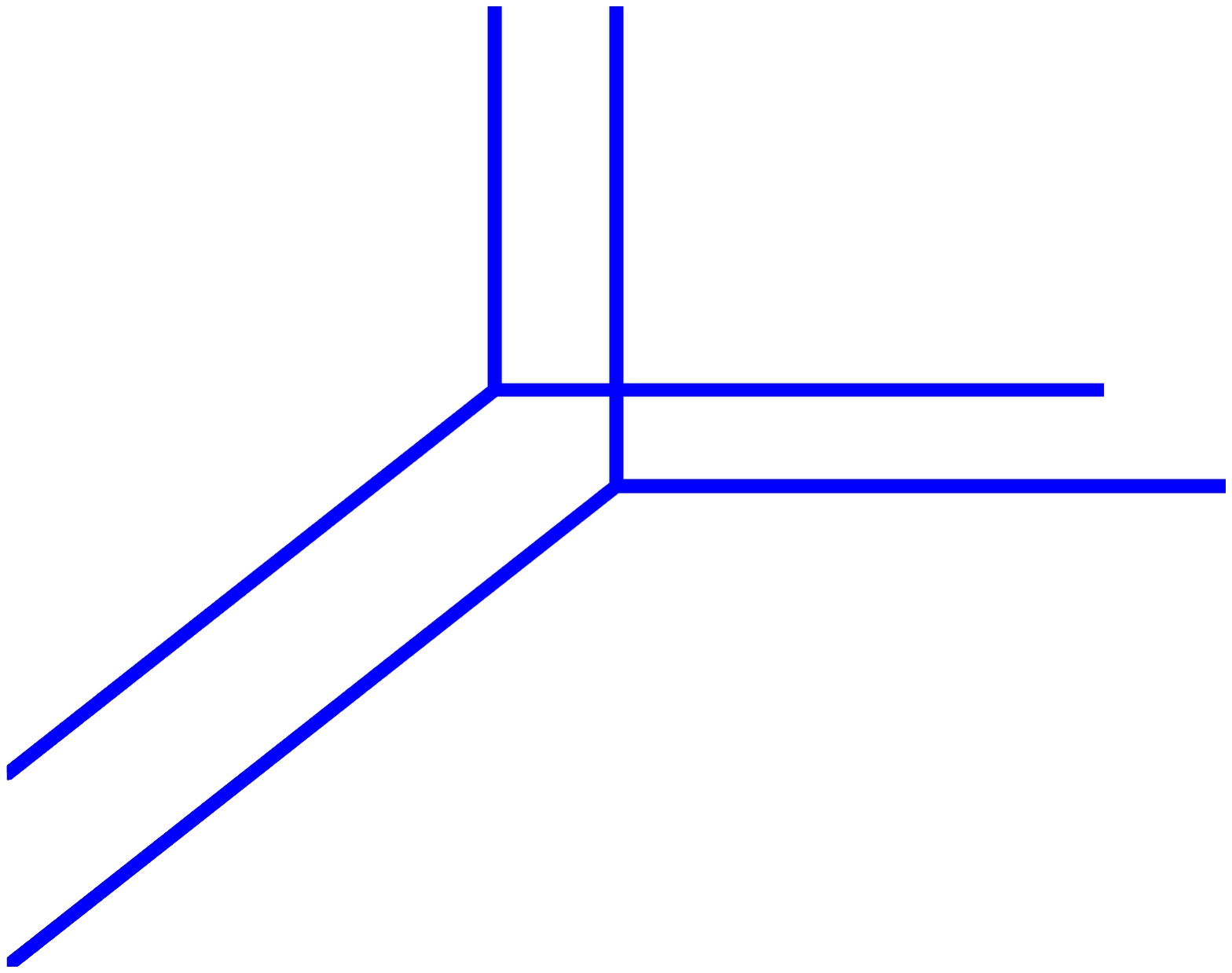}
  \caption{These pictures illustrating Theorem~B show two  
    scaled amoebae and the tropicalization of the curve 
    $V(f_t)$ in the torus $Y=\C^{*2}$, where $f_t=(z_1+z_2+1)(tz_1+t^{-1}z_2+1)$.
    The first two pictures show the amoeba of the complex
    curve $V(f_a)$, scaled by a factor
    $(\log|a|^{-1})^{-1}$, for $a=0.5$ and $a=0.2$, respectively.
    The last picture shows the tropicalization of the curve
    $V(f_t)$ over $\C\lau{t}$.}\label{F103}
\end{figure}

\begin{figure}
  \includegraphics[width=3.3cm]{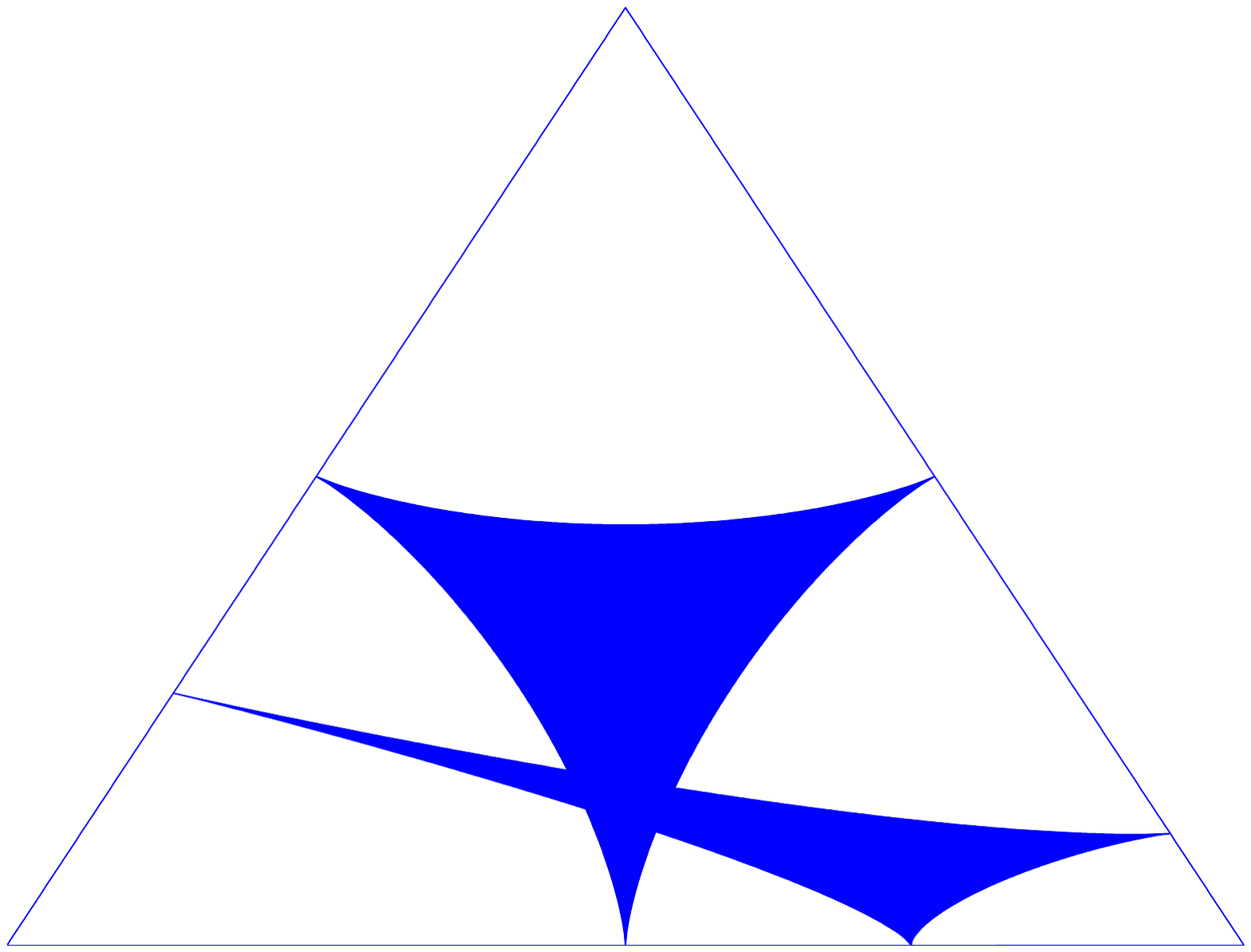}
  \hspace*{0.5cm}
  \includegraphics[width=3.3cm]{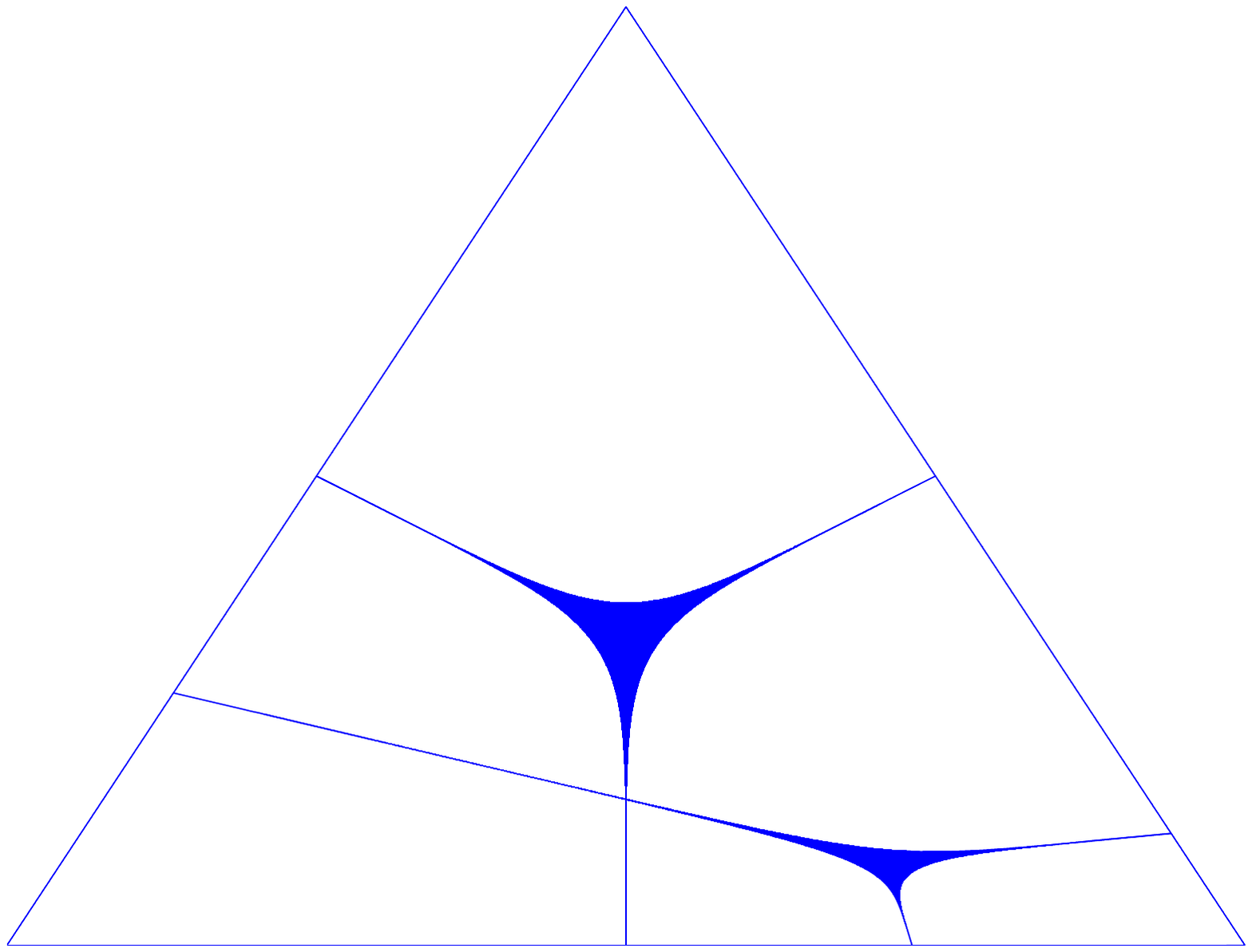}
  \hspace*{0.5cm}
  \includegraphics[width=3.3cm]{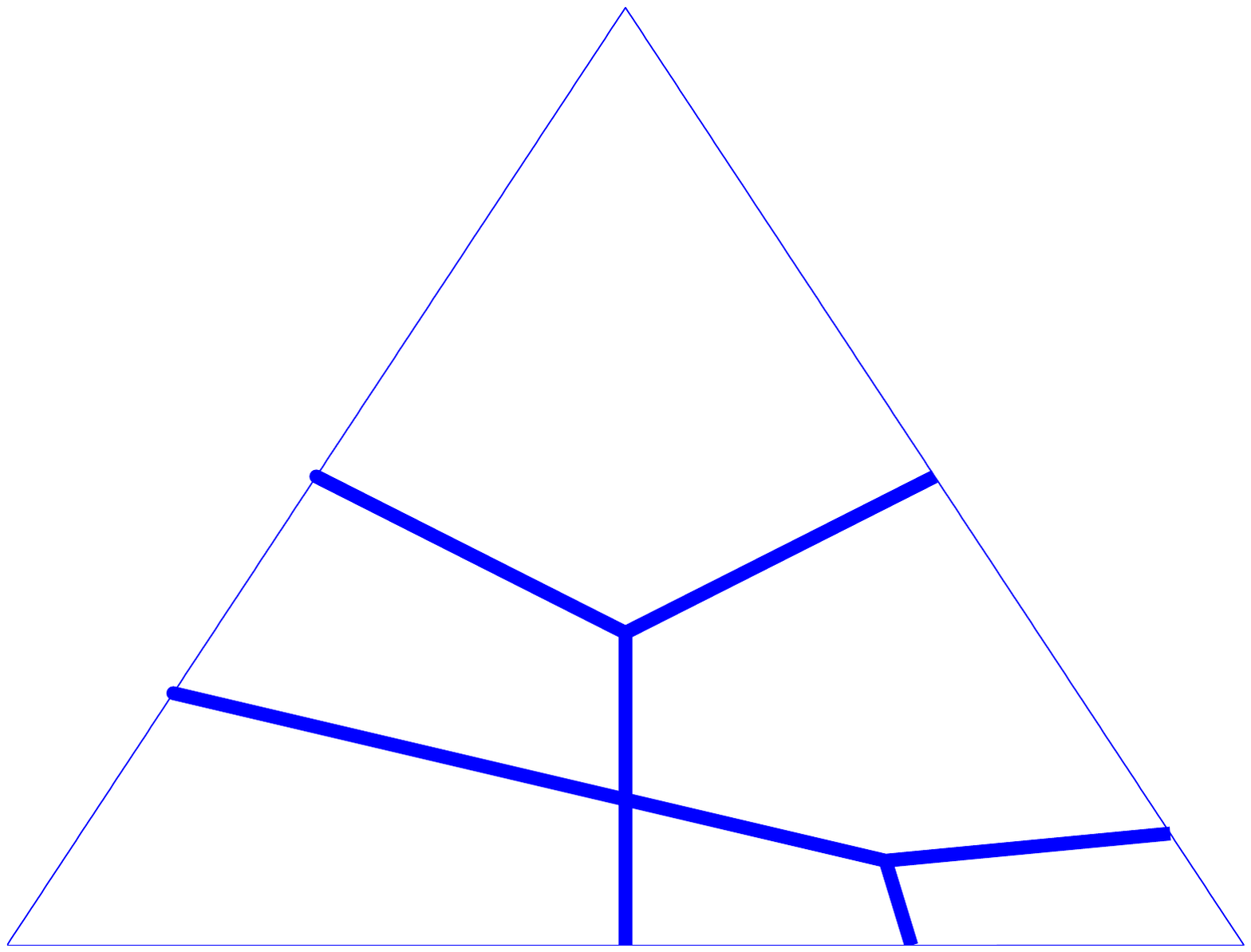}
  \caption{This picture illustrates Theorem~B for the closure
    in $\P^2$ of the curve $V(f_t)$ in Figure~\ref{F103}. 
    The triangle is the moment polytope of 
    $Y=\P^2$ with its canonical polarization.}\label{F104}
\end{figure}

\medskip
Our proof of Theorem~B is quite different and does not rely on
reduction to the hypersurface case. Indeed, 
the purpose of this paper is to show that these results on
degenerations of amoebae 
are rather direct consequences of a 
continuity property of the fibers of certain \emph{Berkovich spaces}
that were introduced in~\cite{BerkHodge} and contain both Archimedean
and non-Archimedean information.
Our results give further evidence to the suggestion on~p.51 of~\loccit
that such spaces are ``worth studying''.

\smallskip
Let us explain all this in the context of Theorem~A$'$,
leaving the setting of Theorem~B to~\S\ref{S104}.
Consider the field $\C$ equipped with the norm
\begin{equation}\label{eq:norm}
  \|\cdot\|:=\max\{|\cdot|_\infty,|\cdot|_0\},\tag{$\blacklozenge$}
\end{equation}
Note that $\|\cdot\|$ is only submultiplicative, but $(\C,\|\cdot\|)$ is
nevertheless a Banach ring. Given a complex algebraic variety 
$X$, Berkovich introduced in~\cite{BerkHodge} 
a natural \emph{analytification} $\XAn$
of $X$ with respect to the norm $\|\cdot\|$ on $\C$.
See~\S\ref{S101} for more details on this and on what follows.
The space $\XAn$  is a locally compact Hausdorff space and 
comes with a natural continuous and surjective map 
\begin{equation*}
  \lambda\colon\XAn\to[0,1].
\end{equation*}
The fiber $\lambda^{-1}(1)$ is the usual complex analytic space $X^h$
associated 
to $X$.\footnote{The superscript ``$h$'' stands for ``holomorphic''.}
For $0<\rho\le 1$, $\lambda^{-1}(\rho)$ is
homeomorphic to $X^h$. Finally, the fiber $\lambda^{-1}(0)$
is the Berkovich analytification of $X$ with respect to $|\cdot|_0$.
See Figure~\ref{F102} for an illustration of $(\P^1)^\mathrm{An}$.
\begin{figure}
  \includegraphics[width=12cm]{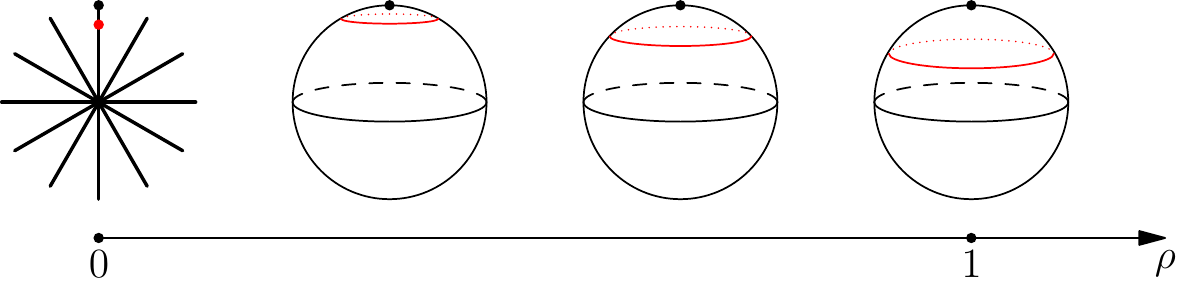}
  \caption{The analytification $\P^{1,\An}$ of the complex projective line with respect to the norm $\|\cdot\|$ on $\C$, together with the canonical map $\lambda\colon\P^{1,\An}\to[0,1]$. The fiber $\lambda^{-1}(0)$ is the analytification of $\P^1$ with respect to the trivial norm, and is homeomorphic to a cone over $\P^1(\C)$. All the other fibers are homeomorphic to a sphere.
The points on top form a continuous section of $\lambda$. The smaller circle in the fiber $\lambda^{-1}(\rho)$ is of radius $e^{-1/\rho}$; these circles converge as $\rho\to0$ to a unique point in the  fiber $\lambda^{-1}(0)$.}\label{F102}
\end{figure}
\begin{ThmC}
  The map $\lambda\colon\XAn\to[0,1]$ is open.
\end{ThmC}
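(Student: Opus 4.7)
Openness of $\lambda$ is a local property on $\XAn$, so by covering $X$ with Zariski-affine opens---whose analytifications form an open cover of $\XAn$ compatibly with $\lambda$---I reduce to $X=\Spec A$ with $A$ a finitely generated $\C$-algebra. In this setting $\XAn$ is the space of multiplicative seminorms on $A$ dominated by $\|\cdot\|$ on $\C$, with a subbasis of opens of the form $\{x:|f(x)|<b\}$ and $\{x:|f(x)|>a\}$ for $f\in A$; since images commute with unions, it suffices to show that $\lambda(U)$ is open in $[0,1]$ for every basic open $U=\bigcap_{i=1}^k\{x:a_i<|f_i(x)|<b_i\}$. Fix $x_0\in U$ and set $\rho_0:=\lambda(x_0)$. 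If $\rho_0>0$, I would use the natural trivialization $\lambda^{-1}((0,1])\simeq X^h\times(0,1]$, $(z,\rho)\mapsto(f\mapsto|f(z)|_\infty^\rho)$; this is a homeomorphism because any multiplicative seminorm on $A$ whose restriction to $\C$ is the Archimedean $|\cdot|_\infty^\rho$ is itself Archimedean and, by Gelfand--Mazur, factors through a $\C$-valued character on $A$. Writing $x_0=(z_0,\rho_0)$ under this identification, the finitely many conditions $a_i<|f_i(z_0)|_\infty^\rho<b_i$ persist under small perturbations of $\rho$, whence $(\rho_0-\eps,\rho_0+\eps)\subset\lambda(U)$ for some $\eps>0$.

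The essential case is $\rho_0=0$: here $x_0$ is a multiplicative seminorm on $A$ extending the trivial norm on $\C$, with $s_i:=|f_i(x_0)|\in(a_i,b_i)$, and I must show that for every sufficiently small $\rho>0$ there exists $z_\rho\in X^h$ with $|f_i(z_\rho)|_\infty^\rho\in(a_i,b_i)$ for all $i$. The plan is to produce a family $\{z_\rho\}_{\rho>0}$ of complex points for which $|f_i(z_\rho)|_\infty^\rho\to s_i$ as $\rho\to 0^+$; by continuity such $z_\rho$ will then lie in $U$ for all small enough $\rho$, yielding $[0,\eps')\subset\lambda(U)$. To construct this family I would replace $X$ by the Zariski closure of the image of $F=(f_1,\ldots,f_k)\colon X\to\A^k$, reducing to the situation where the $f_i$ are coordinates on a closed subvariety $X\subset\A^k$; then $x_0$ corresponds to a semivaluation on $\C[X]$ extending the trivial one on $\C$, whose values on the coordinates are the prescribed $s_i$.

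The main obstacle is then this approximation step, producing complex points whose rescaled Archimedean moduli approach those of a given non-Archimedean semivaluation. My approach would be to first treat the model case in which $x_0$ is quasi-monomial---that is, monomial in local coordinates on some birational model $\tilde X\to X$ near a smooth stratum---by parametrizing the stratum and specializing one transverse coordinate to $\exp(-t/\rho)$ with $t$ encoding the appropriate weight. A direct computation then yields $|f_i(z_\rho)|_\infty^\rho\to s_i$. The general case should follow from the density of quasi-monomial valuations in $\lambda^{-1}(0)$, via a diagonal/continuity argument exploiting the fact that only finitely many inequalities need to be enforced. The delicate point will be to arrange a genuinely uniform choice of $z_\rho$ across all small $\rho$, rather than merely a sequential approximation, and to avoid any tacit appeal to the stronger convergence statements of Theorems~A or~B, which are meant to be deduced \emph{from} Theorem~C rather than used in its proof.
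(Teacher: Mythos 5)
Your strategy coincides with the paper's: reduce to the fiber $\lambda^{-1}(0)$ over $0$, exploit density of distinguished valuations there, monomialize on a smooth birational model, and produce complex points by specializing coordinates to modulus $e^{-\alpha_i/\rho}$. But there is a genuine gap in the model-case computation. The asserted convergence $|f(z_\rho)|_\infty^\rho\to|f(x_0)|$ for points $z_\rho$ on the polycircle $\{|z_i|=e^{-\alpha_i/\rho}\}$ is \emph{false} for a general quasi-monomial point: if two or more monomials $a_m z^m$ in the expansion of $f$ achieve the minimal weight $\langle m,\alpha\rangle$, Archimedean cancellation can make $|f(z_\rho)|_\infty$ arbitrarily small (e.g.\ $f=z_1-z_2$, $\alpha_1=\alpha_2$, and $z_\rho$ with $z_1=z_2$), so the lower bound needed for sets of the form $\{|f|>t\}$ fails at some points of the polycircle. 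The computation works only when the weights $\alpha_1,\dots,\alpha_n$ are linearly independent over $\Q$, i.e.\ when $x_0$ has maximal rational rank $n=\dim X$: then there is a unique minimizing exponent $\bar m$, and the growth bound $|a_m|_\infty\le R^{|m|}$ coming from convergence of the power series lets the term $a_{\bar m}z^{\bar m}$ dominate the tail uniformly on the polycircle. This is precisely why the paper works with maximal-rational-rank points (density: Lemma~\ref{L102}/\ref{L104}; monomialization via local uniformization of Abhyankar valuations: Lemma~\ref{L103}), and you must restrict your dense class accordingly, or else do substantial extra work choosing arguments of the $z_i$ to rule out cancellation.

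Two further remarks. The ``diagonal/continuity argument'' you anticipate is unnecessary, and the uniformity issue you flag is not the real difficulty: given an open $U$ meeting $\lambda^{-1}(0)$, simply choose a maximal-rational-rank point $x'$ \emph{inside} $U\cap\lambda^{-1}(0)$ by density, shrink $U$ to a finite intersection of subbasic sets $\{|f|<t\}$, $\{|f|>t\}$ around $x'$, and show the entire polycircle $Z_\rho$ lies in $U$ for all $\rho\le\varepsilon^2$; this yields $\lambda(U)\supset[0,\varepsilon^2]$ directly, which is all that openness at any point of $\lambda^{-1}(0)$ requires. Finally, your preliminary reduction to the Zariski closure of the image of $F=(f_1,\dots,f_k)$ is dispensable and slightly off: a complex point of that closure need not lift to a complex point of $X$, so constructing $w_\rho$ downstairs does not produce the required $z_\rho\in X^h$. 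Work instead with the power-series expansion of each $f_i$ at the center of the monomial valuation, as the paper does.
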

This result essentially says that the Archimedean fibers
$\lambda^{-1}(\rho)$ converge to the 
non-Archimedean fiber $\Xan=\lambda^{-1}(0)$ as $\rho\to0+$.
The latter convergence property implies Theorem~A$'$ since there is a natural
continuous, proper and surjective \emph{tropicalization map} 
$\YAn\to\Ytrop$ that takes
the fiber $\lambda^{-1}(0)$ to $\Xtrop$ and takes any other fiber
$\lambda^{-1}(\rho)$ to the scaled amoeba $\rho\cdot\cA_X$.

We prove Theorem~C using the fact that the points of $\Xan$ of
maximal rational rank are dense. Using resolution of singularities, such
points can be realized on a blowup as monomial valuations with rationally
independent weights, and then the proof is concluded by a direct
computation. See~\S\ref{S108} for details. 
A statement related to Theorem~C appears as Corollary~6.8 in~\cite{PoineauLocale}.

\medskip
Let us make some bibliographical comments.
Amoebae (with the opposite sign convention of ours) were 
introduced in~\cite{GKZ} and have been intensively studied.

When $X=V(f)\subset\C^{*n}$ is a hypersurface, the complement of the amoeba 
$A_X$ in $\R^n$ is convex and its connected components correspond to 
Laurent series expansions of $1/f$ at the origin~\cite{FPT00,PR02,Rul00,Rul01,TdW13}. 
Hypersurface amoebae can also be effectively studied using 
Ronkin functions~\cite{MR01,PR04,Rul01,PT05}. 
Their boundaries are studied in~\cite{MN13a,Mik00,Mik04a,SdW13}

In dimension $n=2$, there is an inequality between the area of 
the amoeba $A_f$ defined by a polynomial $f$ and the area of the 
Newton polygon of $f$~\cite{PR04}: the case of equality was characterized in~\cite{MR01}
as arising from Harnack curves in real algebraic geometry. 
Further interesting relations between real algebraic curves and amoebae are
studied in~\cite{Mik00}. 
The degeneration of amoebae in dimension two onto tropical varieties 
is used in a striking way in~\cite{Mik05} for enumerative problems.
Planar amoebae also arise in certain considerations in statistical thermodynamics~\cite{KOS07,PPT13}.

In higher codimension, amoebae may or may not have finite volume~\cite{MN13b}
but their complements retain certain
weaker convexity properties~\cite{Hen04,Ras09}.
Computational aspects are studied in~\cite{Pur08,The02,TdW11,deW13}.
For more information and further references, see the surveys~\cite{Ite04,Mik04b}.

\smallskip
Tropical varieties have appeared in many different contexts.
We have defined them here as images under the tropicalization map, 
but they can also be characterized in terms of so-called \emph{initial degenerations}~\cite{EKL06,SpeyerSturmfels,Dra08,PayneLimit}.
They have a polyhedral structure~\cite{BieriGroves,EKL06} that satisfies a 
\emph{balancing condition}~\cite{SpeyerThesis,SturmfelsTevelev,GublerGuide}.
Tropical geometry, especially for curves, can also, to some extent,
be developed intrinsically, see~\cite{BN07,BPR11,IMS09,Mik06}.
It has seen striking applications to 
algebraic geometry~\cite{CDPR12,JensenPayne,Mik00,Mik05}.

\smallskip
The relation between Berkovich spaces (over a valued field) 
and tropical geometry appears implicitly already in the work
of Bieri-Groves~\cite{BieriGroves} which predates the general
theory developed by Berkovich himself. Since then, it has been
systematically studied by many authors. 
For finer properties of the tropicalization map, 
see~\eg~\cite{ABBR13,ABBR14,BPR11,DucrosPolytopes,Gub07,GRW14,OssermanPayne,Rab12}.
In~\cite{PayneLimit,FGP13} it is shown that the Berkovich analytification of an algebraic variety 
over a non-Archimedean field is the limit of its tropicalizations over all embeddings into toric varieties.

\smallskip
The general idea of using non-Archimedean techniques to study various kinds
of limiting behavior of complex analytic objects is also not new. 
Morgan and Shalen~\cite{MS84} used valuations to compactify complex
affine varieties. Favre recently used the space $\XAn$ to recast and generalize 
their construction using Berkovich spaces; a statement close to Theorem~C 
(and even closer to Theorem~C$'$ in~\S\ref{S104}) appears 
in~\cite{FavreMS}. 
Other examples of how Berkovich spaces, especially analytifications of 
complex algebraic varieties with respect to the trivial norm, can be
used to study complex analytic phenomena can be found 
in~\cite{BerkHodge,hiro,pshsing,valmul,eigenval,dyncomp,Kiwi1,MS84}.
Also related---at least in spirit---is the procedure known as 
Maslov dequantization: see~\cite{Lit05,IM12} and the references therein.

\smallskip
A version of Theorem~A for a non-Archimedean absolute value 
was proved by Gubler, see~\cite[\S8, Cor.~11.13]{GublerGuide}. 
The techniques in this paper
could likely be adapted to give a new proof of this result, at least
in residue characteristic zero, but we leave this for future work.
It would also be interesting to study the adelic amoebae associated to  
varieties defined over a number field, see~\cite{EKL06,PayneAdelic}. 
The results in this paper have recently been used to study the 
topology of the complements of certain tropical vareities~\cite{NS}.

\medskip
The organization of the paper is as follows. In~\S\ref{S103} we 
recall the notion of continuously varying families of spaces
in the sense of Kuratowski. In~\S\ref{S101} we discuss various 
analytification procedures and prove Theorem~C.
Then, in~\S\ref{S102} we study
the tropicalization map from $\YAn$ to $\Ytrop$ for a toric variety $Y$.
In particular, we prove Theorem~A$'$.
Finally, in~\S\ref{S104} we study one-parameter families of 
varieties and prove Theorem~B, as well as the required fact,
Theorem~C$'$, about Berkovich spaces.

\begin{Ackn}
  I thank V.~Berkovich for the proof of Lemma~\ref{L104},
  and 
  M.~Baker,
  S.~Boucksom, 
  A.~Ducros, 
  W.~Gubler, 
  S.~Payne
  and 
  A.~Werner,
  for comments on a preliminary version
  of this manuscript.
  I have also benefitted from discussions with 
  E.~Brugall{\'e}, C.~Favre, G.~Mikhalkin and B.~Teissier.
\end{Ackn}
%
%
%
%
%
%
\section{Continuous families of subspaces}\label{S103}
Consider a surjective continuous map $\pi:X\to B$ between 
topological spaces. Write $X_b:=\pi^{-1}(b)$ for $b\in B$. 
We'd like to study the continuity properties of $b\mapsto X_b$.
To this end, suppose that $X$ embeds as a subset of $B\times Y$,
for some topological space $Y$, and that $\pi$ is the restriction of 
the projection of $B\times Y$ onto the first factor.
We can then view $X_b$ as a subset of $Y$ for all $b\in B$. 
\begin{Def}\label{defi:lsc}
  We say that $b\to X_b$ is \emph{upper semicontinuous} (usc) if 
  given $b_0\in B$ and $y\in Y\setminus X_{b_0}$, there exist
  neighborhoods $U$ of $y$ in $Y$ and $B_0$ of $b_0$ in $B$ such that 
  $X_b\cap U=\emptyset$ for all $b\in B_0$.
\end{Def}
\begin{Def}\label{defi:usc}
  We say that $b\to X_b$ is \emph{lower semicontinuous} (lsc) if 
  given $b_0\in B$ and $y\in X_{b_0}$ and given any 
  neighborhoods $U$ of $y$ in $Y$ and $B_0$ of $b_0$ in $B$,
  we have $X_b\cap U\ne\emptyset$ for all $b\in B_0$.
\end{Def}
Naturally,  $b\to X_b$ is \emph{continuous} if it is both 
usc and lsc. These continuity properties are in the sense of 
Kuratowski~\cite{Kuratowski}.
The proof of the following result is left to the reader.
\begin{Lemma}\label{L101}
  The map $b\to X_b$ is usc iff $X$ is closed in $B\times Y$.
  It is lsc iff $\pi:X\to B$ is open.
\end{Lemma}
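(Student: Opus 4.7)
The plan is to unpack both equivalences directly from the definitions, using that the product topology on $B\times Y$ has a basis of open rectangles $B_0\times U$, and that a subset of $X$ is open iff it is the trace on $X$ of an open subset of $B\times Y$. No machinery beyond basic point-set topology is needed, so I do not foresee any real obstacle; the only care required is to translate the quantifiers in Definitions~\ref{defi:lsc} and~\ref{defi:usc} into statements about product neighborhoods.

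For the first equivalence, I would argue as follows. Assume $b\mapsto X_b$ is usc and pick $(b_0,y)\in(B\times Y)\setminus X$. Then $y\notin X_{b_0}$, so by usc there are neighborhoods $U\ni y$ and $B_0\ni b_0$ with $X_b\cap U=\emptyset$ for all $b\in B_0$, i.e.\ $(B_0\times U)\cap X=\emptyset$; this produces a product neighborhood of $(b_0,y)$ avoiding $X$, so $X$ is closed. Conversely, if $X$ is closed and $y\notin X_{b_0}$, then $(b_0,y)$ has a product neighborhood $B_0\times U$ disjoint from $X$, which is exactly the conclusion of usc.

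For the second equivalence, assume $\pi$ is open and take $b_0\in B$, $y\in X_{b_0}$, and a neighborhood $U$ of $y$. Then $(B\times U)\cap X$ is open in $X$ and contains $(b_0,y)$, so its image under $\pi$ is an open neighborhood $B_0$ of $b_0$; each $b\in B_0$ is the first coordinate of some $(b,y')\in X$ with $y'\in U$, hence $X_b\cap U\neq\emptyset$, which gives lsc (read with the natural quantifier order: $U$ first, then $B_0$). Conversely, assume lsc and let $V\subseteq X$ be open, written as $V=W\cap X$ with $W$ open in $B\times Y$. For any $b_0\in\pi(V)$, pick $(b_0,y)\in V$ and a product neighborhood $B_0\times U\subseteq W$ of $(b_0,y)$. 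Applying lsc to $y\in X_{b_0}$ and the neighborhood $U$ yields a neighborhood $B_0'\subseteq B_0$ of $b_0$ such that $X_b\cap U\neq\emptyset$ for every $b\in B_0'$; each such $b$ lies in $\pi(V)$, so $\pi(V)$ is open.

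If the statement of Definition~\ref{defi:usc} is indeed meant with the quantifier "there exists a neighborhood $B_0$ of $b_0$" (rather than "any"), the argument above applies verbatim; this is the only point where one has to be slightly careful when writing up, but no substantive difficulty arises.
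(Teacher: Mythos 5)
Your proof is correct and is exactly the routine point-set argument the paper intends when it leaves this lemma to the reader: translating the usc condition into the existence of product neighborhoods $B_0\times U$ disjoint from $X$, and the lsc condition into openness of $\pi((B\times U)\cap X)$. You are also right that Definition~\ref{defi:usc} must be read with ``there exists a neighborhood $B_0$ of $b_0$'' (the literal ``any'' would force $X_b\cap U\ne\emptyset$ for all $b\in B$), and your handling of the shrinking step $B_0'\subseteq B_0$ in the converse direction is the one place where care is needed and you got it right.
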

Now suppose $Y$ is a metric space. We can then consider 
continuity of $b\to X_b$ in the \emph{Hausdorff topology},
which means the following:
for every $b_0\in B$ and every $\e>0$ there exists a neighborhood
$B_0$ of $b$ in $B$ such that, whenever $b\in B_0$, 
any point in $X_{b_0}$ (resp. $X_b$) is at distance at most $\e$
from some point in $X_b$ (resp. $X_{b_0}$).

Suppose that $X_b$ is a closed subset of $Y$ for all $b$.
Then continuity of $b\to X_b$ in the Hausdorff topology implies 
continuity in the sense of Kuratowski. The converse is true when
$Y$ is compact.

%
%
%
%
%
%
\section{Analytification}\label{S101}
We recall a special case of the construction in~\cite[\S2]{BerkHodge};
see also~\cite[\S1.5]{BerkBook} and~\cite{PoineauAsterisque,PoineauLocale}.
Consider a Banach ring $(k,\|\cdot\|)$. This means that $k$ is a
commutative ring with unit and that $\|\cdot\|:k\to\R_+$ satisfies
$\|a\|=0$ iff $a=0$; $\|a-b\|\le\|a\|+\|b\|$ and 
$\|ab\|\le\|a\|\cdot\|b\|$ for all $a,b\in k$; and 
$k$ is complete in the metric induced by $\|\cdot\|$.
In fact, we will only consider the case when $k$ is a \emph{field}.

Let $X$ be a separated scheme of finite type over $k$. The construction
in~\cite{BerkHodge} associates an \emph{analytification} $X^{\An}$ of
$X$ with respect to the norm $\|\cdot\|$ on $k$. 
It is defined as follows.\footnote{While we shall only
  consider the analytification as a topological space, one can also
  equip it with a structure sheaf.}
For any affine open subset $U=\Spec A$ of $X$, where $A$ is a 
finitely generated $k$-algebra, let $\UAn$ be the (nonempty) set of
multiplicative seminorms on $A$ whose restrictions to $k$ are bounded
by the norm $\|\cdot\|$.
The topology on $\UAn$ is the weakest one for
which $\UAn\ni|\cdot|\to|f|$ is continuous for every $f\in A$. 

It is customary to denote the points in $\Uan$ by a letter such as
$x$ and the corresponding seminorm by $|\cdot|_x$. 
The latter induces a multiplicative norm on $A/\fp_x$, 
where $\fp_x$ is the kernel of $|\cdot|_x$. Let $\cH(x)$ be the 
completion of the fraction field of $A/\fp_x$ with respect to this norm.

By gluing together the spaces $\UAn$ we construct a 
topological space $\XAn$. This space is Hausdorff, 
locally compact and countable at infinity. The assignment 
$x\mapsto\fp_x$ above globalizes to a continuous map 
\begin{equation*}
  \pi\colon\XAn\to X,
\end{equation*}
where $X$ is viewed as a scheme, equipped with the Zariski topology.
The assignment $X\to\XAn$ is functorial.
If $X\hookrightarrow Y$ is an open (resp.\ closed) embedding, then so is 
$\XAn\hookrightarrow\YAn$. If $X\to Y$ is surjective, then so is
$\XAn\to\YAn$.

The analytification of the zero-dimensional affine
space is equal to the Berkovich spectrum $\cM(k,\|\cdot\|)$ defined
in~\cite[\S1.2]{BerkBook}. The canonical map $X\to\A^0=\Spec k$
induces a surjective, continuous map
\begin{equation*}
  \lambda\colon\XAn\to\cM(k,\|\cdot\|).
\end{equation*}

We shall study this general analytification functor $X\mapsto\XAn$ 
for three types of Banach fields $(k,\|\cdot\|)$.
%
%
%
%
\subsection{Archimedean case}\label{S106}
First assume that $k=\C$ is the field of complex numbers and that 
$\|\cdot\|=|\cdot|_\infty$ is the usual Archimedean norm.
Denote by $X^h$ the usual complex analytic variety associated to 
$X$.
Recall that the points of $X^h$ can be identified with the closed
points of $X$.

It turns out that $X^h$ can be identified with the analytification $\XAn$
above in such a way that $\pi$ maps 
a point of $X^h$ to the corresponding closed point of $X$. 
To see this, first note that
$\cM(\C,|\cdot|_\infty)=\{|\cdot|_\infty\}$ is a singleton.
Now consider an open affine subset $U$.
To each point $x\in U^h$ we can
associate a seminorm $|\cdot|_x\in\UAn$ 
by $|f|_x:=|f(x)|_\infty$. This gives rise to a injective continuous map 
$U^h\to\UAn$ which is surjective by the 
Gelfand-Mazur Theorem,  and easily seen to be a homeomorphism.
%
%
%
%
\subsection{Non-Archimedean case}\label{S105}
Next suppose that $k$ is a \emph{non-Archimedean field}. This means
that $\|\cdot\|=|\cdot|$, where $|\cdot|$ is a non-Archimedean,
multiplicative norm on $k$, that is
$|ab|=|a|\cdot|b|$ and $|a-b|\le\max\{|a|,|b|\}$ 
for any $a,b\in k$.
The analytifications $\XAn$ are then special cases of the 
\emph{Berkovich spaces} studied 
in~\cite{BerkBook,BerkIHES}.\footnote{They are good $k$-analytic spaces without boundary.}.
To conform with the notation in~\loccit we write $\Xan$ instead of $\XAn$.

To any non-Archimedean field $(k,|\cdot|)$ is associated
a \emph{value group}
$|k^\ast|:=\{|a|\ \mid\ a\in k^\ast\}$ 
as well as its divisible version
$\sqrt{|k^\ast|}:=\{r^{1/n}\ \mid\ r\in|k^\ast|,\ n\ge 1\}$.
Now suppose $x\in\Xan$. We can view $\sqrt{|k^\ast|}$
and $\sqrt{|\cH(x)^\ast|}$ as $\Q$-vector spaces. 
Define the \emph{rational rank} $t(x)$ of $x$ as the codimension
of $\sqrt{|k^\ast|}$ in $\sqrt{|\cH(x)^\ast|}$.
If $X$ has dimension $n$, then $t(x)\le n$ for 
all $x\in\Xan$, see~\cite[Lemma~2.5.2]{BerkIHES}.
In fact, $t(x)$ is bounded by the transcendence degree over $k$
of the residue field of $\pi(x)$.
We say that $x$ has \emph{maximal rational rank} 
if $t(x)=n$. In this case, $\pi(x)$ is the generic point of 
an irreducible component of dimension $n$,
and $x$ defines a valuation of the residue field at this point.

Our approach to the proof of Theorem~C in the introduction is based on
\begin{Lemma}\label{L102}
  Assume that $X$ has pure dimension $n$ 
  and that the divisible value group $\sqrt{|k^\ast|}$ 
  has infinite codimension in $\R_+^\ast$ as a $\Q$-vector space.
  Then the set of 
  points in $\Xan$ with maximal rational rank, $t(x)=n$, 
  is dense in $\Xan$.
\end{Lemma}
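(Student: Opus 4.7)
The plan is to reduce to the case where $X$ is smooth and then to exhibit, in every open subset of $\Xan$, a monomial valuation of maximal rational rank obtained by perturbing weights so as to exploit the infinite-codimension hypothesis on $\sqrt{|k^{\ast}|}$.

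For the reduction, it suffices to work on each irreducible component, and then on an affine open, so assume $X=\Spec A$ is integral affine of dimension $n$. Choose a proper birational morphism $\phi\colon Y\to X$ with $Y$ regular; this is provided by resolution of singularities in characteristic zero (which covers the cases $k=\C$ with trivial norm and $k=\C\lau{t}$ used in this paper) and by alterations in general. The analytification $\phi^{\an}\colon Y^{\an}\to\Xan$ is continuous and surjective, and since $\phi$ is generically finite the induced extension $\cH(\phi^{\an}(y))\hookrightarrow\cH(y)$ is finite, so the two fields have the same divisible value group and $t(y)=t(\phi^{\an}(y))$. Given any nonempty open $U\subset\Xan$, the preimage $(\phi^{\an})^{-1}(U)$ is a nonempty open subset of $Y^{\an}$, and any maximal-rational-rank point there maps to one in $U$. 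Hence we may assume $X$ is smooth.

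Now fix $x_0\in\Xan$ and a basic open neighborhood $\Omega=\{x:|f_i|_x\in(a_i,b_i),\ 1\le i\le m\}$ with $f_i\in A$. After a further blowup we may assume that $\bigcup_i\{f_i=0\}$ has simple normal crossings support on $X$. For any closed point $\eta\in X$ with regular parameters $z_1,\dots,z_n$ and any $t=(t_1,\dots,t_n)\in\R_{>0}^n$, there is a quasi-monomial valuation $v_{\eta,t}\in\Xan$ determined on the completion $\widehat{\cO}_{X,\eta}\cong\kappa(\eta)\cro{z_1,\dots,z_n}$ by
\begin{equation*}
v_{\eta,t}\Bigl(\sum_\alpha c_\alpha z^\alpha\Bigr)=\min_\alpha\bigl\{v(c_\alpha)+\alpha\cdot t\bigr\},
\end{equation*}
where $v$ is (any) extension of the valuation on $k$ to the finite extension $\kappa(\eta)$. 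The divisible value group of $v_{\eta,t}$ is generated over $\sqrt{|\kappa(\eta)^{\ast}|}=\sqrt{|k^{\ast}|}$ by $e^{-t_1},\dots,e^{-t_n}$, so
\begin{equation*}
t(v_{\eta,t})=\dim_{\Q}\bigl(\Q t_1+\cdots+\Q t_n+\log\sqrt{|k^{\ast}|}\bigr)\big/\log\sqrt{|k^{\ast}|}.
\end{equation*}

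It is standard (via the retractions of $\Xan$ onto skeleta of SNC models) that these quasi-monomial points are dense in $\Xan$; in particular, $\Omega$ already contains some $v_{\eta_0,t_0}$. The map $\R_{>0}^n\ni t\mapsto v_{\eta_0,t}\in\Xan$ is continuous (it is piecewise $\Q$-affine on a neighborhood of $t_0$ in the sense that $t\mapsto|f|_{v_{\eta_0,t}}$ is continuous for each $f\in A$), so the set of $t$ near $t_0$ with $v_{\eta_0,t}\in\Omega$ is open and nonempty. The infinite-$\Q$-codimension hypothesis on $\sqrt{|k^{\ast}|}\subset\R_+^{\ast}$ now lets us select such a $t$ whose coordinates are $\Q$-linearly independent modulo $\log\sqrt{|k^{\ast}|}$; the formula above then gives $t(v_{\eta_0,t})=n$, completing the proof. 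The main obstacle is the density of quasi-monomial points, which is the place where resolution of singularities and the coherent system of skeleton retractions are genuinely needed; once this is granted, the rational-rank maximization is a transparent perturbation argument in which the infinite-codimension hypothesis is decisive.
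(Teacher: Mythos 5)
Your route---resolve singularities, find a quasi-monomial point in every open set, then perturb the weights---is genuinely different from the paper's, which deduces the lemma from the more general analytic statement (Lemma~\ref{L104}) via a base change $X\hat\otimes_kK_r$, density of rigid points, and Noether normalization, ending with the maximal point of a polydisc of rationally independent radii. Your final perturbation step is fine and uses the infinite-codimension hypothesis exactly as the paper does. But the step you defer to as ``standard'' is where the proof actually lives, and as you have formulated it, it is false. After reducing to $X=\Spec A$ affine, every point $v_{\eta,t}$ you produce is centered at a closed point of $X$ or of a proper birational model of it, hence is nonnegative on $A$, i.e.\ satisfies $|f|\le1$ for all $f\in A$. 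The space $\Xan$ contains many points violating this: already for $X=\A^1$ over trivially valued $\C$, the point with $|z|=2$ lies in the open set $\{|z|>1\}$, which meets none of your valuations. The density statements in the literature concern either proper varieties or the space of valuations \emph{centered on} $X$; to invoke them here you must first compactify $X$ and allow $\eta$ to be a closed point of a model of the compactification. This is repairable, but it must be done.

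The second gap concerns the generality of $k$. The lemma is stated for an arbitrary complete valued field satisfying the codimension hypothesis, and for nontrivially valued $k$ your construction at closed points of the $k$-variety $X$ does not make sense: the quantity $\min_\alpha\{v(c_\alpha)+\alpha\cdot t\}$ can be $-\infty$ on elements of $A$. For instance, with $k=\C\lau{t}$ and $A=k[z,(1-z/t)^{-1}]$, the expansion of $(1-z/t)^{-1}$ at $z=0$ has coefficients $t^{-j}$, and the minimum diverges for any weight smaller than $v(t)$. The correct quasi-monomial points over such $k$ are attached to models over the valuation ring $k^\circ$, and their density is not standard for a general (e.g.\ non-discretely valued) $k$---which is precisely why the paper passes to affinoids and Noether normalization instead of using resolution of singularities. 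Your argument can likely be completed in the two cases the paper actually needs ($\C$ with the trivial norm here, and $\C\lau{t}$ via Lemma~\ref{L104}), but it does not prove the lemma as stated.
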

In fact, a more general statement is true. I am grateful to 
V.~Berkovich for the following statement 
and proof. (Closely related results appear as 
Lemma~10.1.2 of~\cite{DucrosFamilies} and Corollary~5.7 of~\cite{PoineauAngelique}.)
Here we freely use terminology and results 
from~\cite{BerkBook} and~\cite{BerkIHES}.
\begin{Lemma}\label{L104}
  Let $k$ be as in Lemma~\ref{L102}.
  Consider a $k$-analytic space $X$ of pure dimension $n$ 
  and let $X'$ be the set of points $x\in X$ such that $t(x)=n$.
  Then $X'$ is dense in $X$.
\end{Lemma}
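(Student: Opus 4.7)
The plan is to reduce, by a local Noether-normalization argument, to the case where $X$ is an $n$-dimensional closed polydisc over $k$, and then to exhibit a dense family of max-rank points on the polydisc using the hypothesis that $\sqrt{|k^\ast|}$ has infinite codimension in $\R_+^\ast$.

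First I would note that $t(x)$ depends only on the completed residue field $\cH(x)$, so we may replace $X$ by its reduction. The statement is local, so fixing $x_0 \in X$, we may pass to a $k$-affinoid neighborhood $V$ of $x_0$; since $X$ has pure dimension $n$, so does $V$. By Noether normalization for affinoid algebras, applied after localizing at an irreducible component of $V$ through $x_0$, we obtain a finite surjective $k$-analytic morphism $\varphi \colon V \to D$, where $D = \{(z_1,\ldots,z_n) : |z_i| \leq 1\}$ is the unit polydisc of dimension $n$. For any $x \in V$ with $y = \varphi(x)$, the extension $\cH(x)/\cH(y)$ is finite, hence $\sqrt{|\cH(x)^\ast|}$ and $\sqrt{|\cH(y)^\ast|}$ have the same codimension above $\sqrt{|k^\ast|}$; thus $t(x) = t(y)$. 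Since $\varphi$ is a finite surjective morphism between equidimensional Berkovich spaces of the same dimension, it is open (a standard foundational fact), so the preimage of a dense subset is dense. It therefore suffices to prove density of the max-rank locus in $D$.

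Next, given $y_0 \in D$ and an open neighborhood $U$ of $y_0$, I would locate inside $U$ an affinoid polydisc $D_{a,r} = \{(z_1,\ldots,z_n) : |z_i - a_i| \leq r_i\}$ for some $a \in k^n$ (possibly after a finite base change) and some polyradius $r = (r_i) \in \R_+^n$. Because $\sqrt{|k^\ast|}$ has infinite codimension in $\R_+^\ast$ as a $\Q$-vector space, we may perturb $r$ slightly so that $r_1,\ldots,r_n$ become $\Q$-linearly independent modulo $\sqrt{|k^\ast|}$ while keeping $D_{a,r} \subset U$; this uses the elementary observation that any proper $\Q$-subspace of $\R$ has dense complement, applied inductively. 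With such $r$ in hand, the generalized Gauss point $y \in D_{a,r}$ defined by
\begin{equation*}
  \Bigl|\sum_I c_I (z-a)^I\Bigr|_y = \max_I |c_I|\, r_1^{i_1}\cdots r_n^{i_n}
\end{equation*}
lies in $U$, and its value group $\sqrt{|\cH(y)^\ast|}$ is the $\Q$-span of $\sqrt{|k^\ast|}$ together with $r_1,\ldots,r_n$, so $t(y) = n$.

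The main obstacle I anticipate is the openness of the Noether-normalization morphism $\varphi$, which is standard but requires care with the foundations of Berkovich geometry. A secondary subtlety is constructing a polydisc neighborhood $D_{a,r}$ with $k$-rational center around an arbitrary $y_0 \in D$, which in general may require passing to a finite extension of $k$ (the density statement being insensitive to such an extension). Once these foundational points are in place, the argument reduces to the explicit Gauss-point computation.
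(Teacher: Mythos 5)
Your architecture (Noether-normalize onto a polydisc, then exhibit a dense family of Gauss points there) is genuinely different from the paper's, and it has a gap at the second step that is fatal in precisely the case the paper needs, namely $k=\C$ with the trivial norm (this is how Lemma~\ref{L102} is invoked in the proof of Theorem~C). Over a trivially valued field, rigid points are \emph{not} dense in the polydisc, and no finite extension of an algebraically closed trivially valued field produces any; so your method of locating a polydisc $D_{a,r}$ with rational center inside an arbitrary open $U$ fails. Worse, the set of poly-Gauss points $\eta_{a,r}$ in the \emph{fixed} coordinates is simply not dense. Concretely, take $n=2$, $k=\C$ trivially valued, and let $x=e^{-v}$ with $v=\ord_{E_2}$, where $E_2$ is obtained by blowing up the origin and then a free point of the first exceptional divisor off the strict transforms of the axes; then $v(z_1)=v(z_2)=1$ while $v(z_2-cz_1)=2$ for one $c\ne0$. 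A monomial valuation $w$ at a closed point $a$ with weights $\alpha_i=-\log r_i$ satisfies $w(z_i)=\min(\alpha_i,0)=0$ unless $a_i=0$; so $w(z_i)\approx 1$ forces $a=0$ and $\alpha_i\approx 1$, whence $w(z_2-cz_1)=\min(\alpha_1,\alpha_2)\approx 1\ne 2$. Thus the open set $\{e^{-1.1}<|z_i|<e^{-0.9},\ e^{-2.1}<|z_2-cz_1|<e^{-1.9}\}$ contains $x$ but no $\eta_{a,r}$: the max-rank points near $x$ are monomial only in coordinates on a blowup, so no single coordinate system on $D$ can furnish the dense family you want.

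The paper's proof is structured to avoid exactly this (and also your openness worry). It first base-changes to $K_r$ for suitable $r$ --- a \emph{transcendental} valued extension adjoining new values --- so that the space becomes strictly affinoid over a nontrivially valued field; there rigid points are dense and have fundamental systems of strictly affinoid neighborhoods, which reduces density to showing that every strictly affinoid space of pure dimension $n$ contains \emph{at least one} point with $t=n$. That existence statement follows from Noether normalization using only surjectivity of the finite map and your (correct) observation that $t$ is invariant under finite extensions of $\cH(\cdot)$; no openness is needed. Your route, by contrast, leans on openness of the normalization map, which is false for general finite surjective morphisms between equidimensional spaces (the normalization of a nodal curve already fails) and, even with the normal irreducible target you have here, is a genuine theorem in Berkovich geometry rather than a routine fact. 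To repair your argument, insert the $K_r$ base change at the outset and replace ``density of Gauss points in $D$'' by ``density of rigid points plus one max-rank point per affinoid neighborhood'' --- at which point you have recovered the paper's proof.
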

\begin{proof}
  We may assume that $X$ is $k$-affinoid. 
  Given positive numbers $r_1,\dots,r_m$ whose images in
  $\R_+^\ast/\sqrt{|k^\ast|}$ are linearly independent, define a 
  valued field extension $K_r/k$ as in~\cite[p.22]{BerkBook}.
  We can pick $r$ such that the base change 
  $Y=X\hat\otimes_kK_r$ is strictly $K_r$-affinoid and 
  of pure dimension $n$. 
  The image of the analogous subset $Y'$ of $Y$ in $X$ under the 
  continuous canonical map $Y\to X$ lies in $X'$. 
  This reduces the situation to the case when $X$ is strictly
  $k$-affinoid and $k$ is nontrivially valued.
  
  The set $X_0$ of points $x\in X$ with $[\cH(x):k]<\infty$ is dense 
  in $X$, and any point of $X_0$ 
  has a fundamental system of strictly affinoid neighborhood,
  see Proposition~2.1.15 and its proof in~\cite{BerkBook}.
  Hence  it suffices to show that every strictly
  $k$-affinoid space of pure dimension $n$ contains a point $x$ with $t(x)=n$. 
  By Noether normalization, the situation is reduced to 
  the case when $X$ is a closed polydisc of radii one. 
  By the assumption on $k$, we can find numbers $0<r_1,\dots,r_n<1$ 
  whose images in $\R_+^\ast/\sqrt{|k^\ast|}$ are linearly independent. 
  Then the maximal point of the closed 
  polydisc of radii $(r_1,\dots,r_n)$ belongs to $X'$.
\end{proof}

Now we specialize to the case when $k=\C$ is the field
of complex numbers and $|\cdot|=|\cdot|_0$ is
the \emph{trivial norm}. Berkovich spaces
over this non-Archimedean field has seen a surprising number
of applications, see for 
example~\cite{BerkHodge,hiro,FavreMS,pshsing,eigenval,dyncomp,dynberko,Thuillier}. 
Their topological structure is partially described in~\cite{hiro,valtree,dynberko}.

Consider a complex algebraic variety $X$ of pure dimension $n$.
Here is an example of a point $x\in\Xan$ of maximal rational rank, $t(x)=n$.
Suppose $\xi\in X$ is a closed point, 
$X$ is smooth at $\xi$ and there exist coordinates $z_1,\dots,z_n$
at $\xi$ and positive numbers $\a_i>0$, $1\le i\le n$.
Then we can define a \emph{monomial valuation} $v$ on 
$\widehat{\scO_{X,\xi}}\simeq\C\cro{z_1,\dots,z_n}$
by setting 
\begin{equation*}
  v(\sum_{m\in\Z_+^n}a_mz^m)
  :=\min\{m_1\a_1+\dots+m_n\a_n\mid a_m\ne0\}.
\end{equation*}
The valuation $v$ defines a point $x=e^{-v}$ in $\Xan$ with $\pi(x)=\xi$,
and we have $t(x)=n$ iff the numbers $\a_i$ are linearly independent
over $\Q$. We call $x$ a \emph{monomial point}.
\begin{Lemma}\label{L103}
  Assume that $X$ has pure dimension $n$ and
  that $x\in\Xan$ has maximal rational rank
  $t(x)=n$.
  Then there exists a surjective birational morphism 
  $\f:Y\to X$, with $Y$ smooth, and a monomial point 
  $y\in\Yan$ with $t(y)=n$ and $\f^{\an}(y)=x$.
\end{Lemma}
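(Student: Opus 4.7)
\smallskip
\noindent\textbf{Proof plan.} The goal is to realize the valuation $v$ on $\C(X)$ associated to $x$ as a monomial valuation on a smooth model provided by Hironaka. Because $t(x)=n$, Abhyankar's inequality forces $\pi(x)$ to be the generic point of some $n$-dimensional component $X_0\subset X$, so I first reduce to the case that $X$ is irreducible; the remaining components of $X$ can be resolved separately and incorporated via disjoint union to arrange the required surjectivity of $\phi\colon Y\to X$. Thus assume $X$ irreducible and let $v$ denote the corresponding valuation on $\C(X)$, trivial on $\C$, with $\ratrk(v)=n$.

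Choose $f_1,\dots,f_n\in\C(X)^*$ with $v(f_i)>0$ and with $v(f_1),\dots,v(f_n)$ linearly independent over $\Q$; such $f_i$ exist because $\ratrk(v)=n$, after replacing any element of negative $v$-value by its inverse. By Hironaka, find a projective birational morphism $\phi\colon Y\to X$ with $Y$ smooth such that (a)~the rational map $(f_1,\dots,f_n)\colon X\dashrightarrow\A^n$ extends to a morphism on $Y$, so that each $f_i$ becomes a regular function on $Y$, and (b)~the divisor $D:=\div(f_1\cdots f_n)$ is simple normal crossings.

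The heart of the argument is a local analysis at the center $\eta$ of $v$ on $Y$. A second application of Abhyankar's inequality (now on $Y$) gives $\dim\overline{\{\eta\}}=0$, so $\eta$ is a closed point. Let $z_1,\dots,z_k$ be local equations at $\eta$ of the SNC components of $D$ through $\eta$; these extend to a regular system of parameters $z_1,\dots,z_n$ at $\eta$. Since each $f_i$ is regular at $\eta$ with support contained in $D$, one can write
\begin{equation*}
f_i=u_i\,z_1^{m_{i1}}\cdots z_k^{m_{ik}},\qquad u_i\in\scO_{Y,\eta}^{\times},\ m_{ij}\in\Z_{\ge 0}.
\end{equation*}
Applying $v$ and using $v(u_i)=0$ yields $v(f_i)=\sum_{j=1}^{k}m_{ij}\,v(z_j)$, so the $\Q$-independence of the $v(f_i)$ forces the integer matrix $(m_{ij})$ to have rank $n$; therefore $k=n$, and the weights $\alpha_j:=v(z_j)$ are themselves positive and $\Q$-linearly independent.

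It then remains to verify that $v$ coincides with the monomial valuation $v_{\mathrm{mon}}$ on $\C(Y)$ defined at $\eta$ by the weights $\alpha_j$, which identifies $x$ with the monomial point $y\in\Yan$ associated to $(\eta;z_1,\dots,z_n;\alpha_1,\dots,\alpha_n)$ and gives $t(y)=n$. On polynomial expressions in $z_1,\dots,z_n$ the $\Q$-independence of the $\alpha_j$ ensures distinct monomials have distinct $v$-values, and the non-Archimedean triangle inequality gives $v=v_{\mathrm{mon}}$ exactly. For a general $f\in\scO_{Y,\eta}$, expand $f$ in the completion $\widehat{\scO_{Y,\eta}}\cong\C\cro{z_1,\dots,z_n}$ and truncate at order $N$: the tail lies in $\fm_\eta^{N+1}$ and hence has $v$-value at least $(N+1)\min_j\alpha_j$, exceeding $v_{\mathrm{mon}}(f)$ for $N$ sufficiently large, and a final application of the strong triangle inequality yields $v(f)=v_{\mathrm{mon}}(f)$. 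I expect the main obstacle to be arranging the Hironaka step so that the $f_i$ are genuinely regular (not merely rational) with $D$ simple normal crossings near the center of $v$, together with this truncation argument that establishes monomiality.
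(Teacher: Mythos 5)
Your argument is, in substance, the proof that the paper chooses not to write out: the author disposes of this lemma in three lines by noting that $v=-\log|\cdot|_x$ is an Abhyankar valuation and citing local uniformization (\cite{KK05}, with the Hironaka-based proofs in \cite[Prop.~2.8]{ELS} and \cite[Prop.~3.7]{jonmus}). What you wrote is essentially that Hironaka-based argument, and most of it is sound. In particular, the obstacle you flag at the end is not one: you do not need the $f_i$ to become regular on $Y$ (which is in general impossible---no modification of $\A^1$ makes $1/z$ regular); it suffices that $\div(f_1\cdots f_n)$ be simple normal crossings at the center, after which $f_i=u_i\prod_j z_j^{m_{ij}}$ with $m_{ij}\in\Z$ possibly negative, and your rank computation goes through verbatim with integer exponents. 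The truncation step is also fine once you observe that the tail $f-P_N$ lies in $\fm_\eta^{N+1}$ itself and not merely in $\widehat{\fm}_\eta^{N+1}$ (use $\widehat{\fm}_\eta^{N+1}\cap\scO_{Y,\eta}=\fm_\eta^{N+1}$), so that $v$ may legitimately be applied to it.

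The genuine gap is the phrase ``the center $\eta$ of $v$ on $Y$''. A point of $\Xan$ over trivially valued $\C$ is only required to restrict to a bounded seminorm on $\C$, not on the coordinate rings of $X$, so $v$ may take negative values on regular functions and then admits no center on $X$, hence none on any $Y$ proper over $X$. This is not a removable technicality: if $y\in\Yan$ is a monomial point at a closed point $\xi$ and $\f\colon Y\to X$ is any morphism, then $|\cdot|_y\le1$ on $\scO_{Y,\xi}$, so $\f^{\an}(y)$ is $\le1$ on $\scO_{X,\f(\xi)}$, i.e., its valuation is centered on $X$. Hence for $X=\A^1$ and the point $x$ with $|f|_x=e^{\a\deg f}$ for some $\a>0$ (so that $t(x)=1=n$ but $|z|_x>1$), no monomial point on any model maps to $x$, and the statement as literally given fails. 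This defect is inherited from the paper itself, since the cited uniformization results concern centered valuations; the repair, both for your proof and for the application to Theorem~C, is to pass first to a compactification $\bar X$ of $X$, where every valuation acquires a center by the valuative criterion of properness---exactly as the author does in the proof of Theorem~C$'$---and to note that a neighborhood of $x$ in $\XAn$ is also a neighborhood in ${\bar X}^{\An}$. You should either add the hypothesis that $v$ admits a center on $X$ or build the compactification into your first reduction.
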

\begin{proof}
  The point $x$ defines a real rank one valuation on the function field
  of $X$ and the condition $t(x)=n$ implies that this valuation is an
  Abhyankar valuation. The statement to be proved is then an
  example of local uniformization of Abhyankar valuations,
  see~\cite{KK05}. A simple proof using Hironaka's theorem on 
  resolutions of singularities is given in~\cite[Proposition~2.8]{ELS};
  see also~\cite[Proposition~3.7]{jonmus}.
\end{proof}
%
%
%
%
\subsection{Hybrid case}\label{S107}
Finally we consider the ``hybrid'' construction of~\cite[\S2]{BerkHodge} that
combines Archimedean and non-Archimedean information.
Equip $\C$ with the norm $\|\cdot\|$ defined in~\eqref{eq:norm}, that
is,
\begin{equation*}
  \|\cdot\|:=\max\{|\cdot|_\infty,|\cdot|_0\}.
\end{equation*}

The Berkovich spectrum $\cM(\C,\|\cdot\|)$ is the set
of multiplicative seminorms $|\cdot|$ on $\C$ bounded by $\|\cdot\|$. 
Such a seminorm has to be of the form
$|\cdot|_\infty^\rho$ for some $\rho\in[0,1]$, where the case $\rho=0$
is interpreted as the trivial norm. Thus we can identify
$\cM(\C,\|\cdot\|)$
with the interval $[0,1]$, so we get a surjective, continuous map
\begin{equation*}
  \lambda\colon\XAn\to[0,1].
\end{equation*}
Concretely, this map can be defined by $\lambda(x)=\log|e|_x$.

The fiber $\lambda^{-1}(\rho)$ is equal to the analytification of
$X$ with respect to the multiplicative norm $|\cdot|_\infty^\rho$ 
on $\C$ (where $\rho=0$ is interpreted as the trivial norm.)

In view of~\S\ref{S106}, the fiber $\lambda^{-1}(1)$ is 
therefore homeomorphic to (and will be identified with) $X^h$
in such a way that $\pi$ maps
a point of $X^h$ to the corresponding closed point of $X$.

For $0<\rho\le1$, the fiber $\lambda^{-1}(\rho)$ is also 
homeomorphic to $X^h$: each seminorm $|\cdot|$ in 
$\XAn\cap\lambda^{-1}(\rho)$ is of the form 
$|f|=|f(x)|_\infty^\rho$ for some $x\in X^h$. In fact,
$\lambda^{-1}(]0,1])$ is homeomorphic to the 
product $]0,1]\times X^h$, see~\cite[Lemma~2.1]{BerkHodge}.

Finally, the fiber $\lambda^{-1}(0)$ is the 
Berkovich analytification of $X$ with respect to the trivial norm on
$\C$, as in~\S\ref{S105}. 
Following~\cite{BerkHodge}, we denote this space by $\Xan$.

Any closed point $\eta\in X$ gives rise to a continuous 
section $s_\eta$ of $\lambda$: if $\eta\in U=\Spec A$, then $s_\eta(\rho)$
is the multiplicative seminorm on $A$ defined by 
$f\mapsto|f(\eta)|_\infty^\rho$.

See Figure~4 for a picture of the space $\XAn$ when $X=\P^1$.
%
%
%
%
\subsection{Proof of Theorem~C}\label{S108}
We must prove that $\lambda\colon\XAn\to[0,1]$ is open. 
Recall that there exists a homeomorphism
$\lambda^{-1}(]0,1])\simto\,]0,1]\times X^h$ that commutes with $\lambda$,
so the restriction of 
$\lambda$ to $\XAn\setminus\Xan$ is open. 
Therefore, it suffices to prove that for any $x\in\Xan$,
the pair $(X,x)$ satisfies:

\medskip
$(\star)$ for any neighborhood $U$ of $x$ in $\XAn$, $\lambda(U)$
    is a neighborhood of $0$ in $[0,1]$

\medskip\noindent
In fact, it suffices to prove~$(\star)$ for $x$ of maximal rational rank, 
since by Lemma~\ref{L102} such points are dense in $\Xan$.
Thus assume $t(x)=n$.
By Lemma~\ref{L103} we can find a surjective birational morphism 
$\phi\colon Y\to X$ and a monomial point $y\in\Yan$ such that 
$\phi^{\An}(y)=x$.
Since $\phi^{\An}$ is continuous and surjective, it suffices to prove
$(\star)$ for the pair $(Y,y)$.

Thus we may assume that $X$ is smooth and that $x$ is a 
monomial point. 
By assumption, there exists a closed point $\xi\in X$ such that 
$v=-\log|\cdot|_x$ is a monomial valuation on $\cO_{X,\xi}$
in some local coordinates $z_1,\dots,z_n$ at $\xi$,
say with weights $\a_i=v(z_i)>0$, where $\a_1,\dots,\a_n$ are
linearly independent over $\Q$. 
Upon replacing $X$ by an open affine neighborhood, 
we assume that $X=\Spec A$ is affine and that $z_i\in A$ for all $i$.

For $0<\rho\ll1$, consider the following
polycircle in the coordinates $z_i$
\begin{equation*}
  Z'_\rho=\{\eta\in X^h\ \mid\ |z_i(\eta)|=e^{-\a_i/\rho}
  \quad\text{for $1\le i\le n$}\}.
\end{equation*}
Also write $Z_\rho$ for the image of $Z'_\rho$ under the 
isomorphism $\lambda^{-1}(1)\simto\lambda^{-1}(\rho)$. 
We claim that if $U$ is any neighborhood of $x$ in $\XAn$,
and $0<\e\ll1$, then 
then $Z_\rho\subset U$ for $0<\rho\le\e^2$. This will show that 
$\lambda(U)\supset[0,\e^2]$ and hence complete the proof.

To prove the claim, we may assume that $U$ is of the form
\begin{equation*}
  U^+(f,t):=\{y\in\XAn\mid|f|_y<t\}
  \quad\text{or}\quad
  U^-(f,t):=\{y\in\XAn\mid|f|_y>t\}
\end{equation*}
where $f\in A$ and $t>0$.
Indeed, finite intersections of such sets form a basis of neighborhoods
of $x$ in $\XAn$.
We consider only the case $U=U^+(f,t)$, 
leaving the case $U=U^-(f,t)$ to the reader.
Pick a real number $s>0$ such that 
\begin{equation*}
  |f|_x<s<t
\end{equation*}
Expand $f$ as a power series 
\begin{equation*}
  f=\sum_{m\in\Z_+^n}a_mz^m
\end{equation*}
in $\widehat{\cO_{X,\xi}}\simeq\C\cro{z_1,\dots,z_n}$.
This series converges in some neighborhood of $\xi$ in 
$X^h$, so there exists $R\ge1$ such that 
\begin{equation}\label{e101}
  |a_m|_\infty\le R^{|m|}
\end{equation}
for all $m$, where we write $|m|=m_1+\dots+m_n$.

Since the $\a_i$ are rationally independent, there exists
$\bm\in\Z_+^n$ such that $a_{\bm}\ne 0$ and 
$\langle \bm,\a\rangle<\langle m,\a\rangle:=\sum_{i=1}^n m_i\a_i$ 
for all $m\ne \bm$ such that $a_m\ne 0$.
Note that $e^{-\langle\bm,\a\rangle}=|f|_x<s$.
We choose $\e$ small enough so that if $0<\rho\le\e^2$, then
\begin{equation}\label{e102}
  R^{\rho|\bm|}\le\sqrt{\frac{t}{s}},
\end{equation}
\begin{equation}\label{e103}
  R^{\rho|m|} e^{-\langle m,\a\rangle}
  \le R^{\rho|\bm|}e^{-\langle \bm,\a\rangle-\e|m|}
  \quad\text{when $a_m\ne 0$ and $m\ne\bm$},
\end{equation}
and
\begin{equation}\label{e104}
  \left(\sum_{m\in\Z_+^m}e^{-|m|/\e}\right)^\rho<\sqrt{\frac{t}{s}}.
\end{equation}
  
We claim that $Z_\rho\subset U$ for $0<\rho\le\e^2$ 
for such $\e$.
To see this, pick $y\in Z_\rho$.
We use~\eqref{e101}-\eqref{e104} 
to estimate the terms in the series expansion of $f$.
First,
\begin{equation*}
  |a_{\bm}z(\eta)^{\bm}|_\infty
  =|a_{\bm}|_\infty\cdot|z^{\bm}|_y^{1/\rho}
  \le R^{|\bm|}e^{-\langle\bm,\a\rangle/\rho}.
\end{equation*}
Second, if $m\ne\bm$ and $a_m\ne0$, then
\begin{multline*}
  |a_mz(\eta)^m|_\infty
  =|a_m|_\infty\cdot|z^m|_y^{1/\rho}
  \le R^{|m|}e^{-\langle m,\a\rangle/\rho}\\
  \le R^{|\bm|}e^{-\langle\bm,\a\rangle/\rho}e^{-\e|m|/\rho}
  \le R^{|\bm|}e^{-\langle\bm,\a\rangle/\rho}e^{-|m|/\e}.
\end{multline*}
Since $m\ne 0$ when $m\ne\bm$ and $a_m\ne0$, this leads to 
\begin{multline*}
  |f|_y
  =|f(\eta)|_\infty^\rho
  \le\left(\sum_m|a_mz(\eta)^m|_\infty\right)^\rho\\
  \le\left(R^{|\bm|}e^{-\langle\bm,\a\rangle/\rho}\right)^\rho
  \left(1+\sum_{m\ne\bm, a_m\ne0}e^{-|m|/\e}\right)^\rho\\
  \le R^{|\bm|\rho}e^{-\langle\bm,\a\rangle}
  \left(\sum_{m\in\R_+^n}e^{-|m|/\e}\right)^\rho
  <\sqrt{\frac{t}{s}}\cdot s\cdot \sqrt{\frac{t}{s}}
  =t,
\end{multline*}
and hence $y\in U=U^+(f,t)$, completing the proof.
%
%
%
%
%
%
\section{Toric varieties and tropicalization}~\label{S102}
We recall some basic definitions about toric varieties from~\cite{FultonToric}.
Let $N\simeq\Z^n$ be a lattice, $M=\Hom(N,\Z)$ the dual
lattice, and $\Sigma$ a fan in $N$.
To each cone $\sigma\in\Sigma$ is associated a finitely generated 
monoid $S_\sigma:=\check{\sigma}\cap M$, a finitely generated
algebra $\Z[S_\sigma]$ and an affine variety 
$U_\sigma=\Spec\Z[S_\sigma]$. By suitably gluing together the different affine varieties
$U_\sigma$ over $\sigma\in\Sigma$, we obtain a toric variety $Y=Y_\Sigma$.

We can also associate a tropical object $\Ytrop=Y_\Sigma^\mathrm{trop}$ 
to $\Sigma$ following~\cite[\S4.1]{FultonToric} or~\cite{AMRT};
see also~\cite{Kajiwara} or~\cite{PayneLimit}.\footnote{As with the 
  case of the analytification, the tropicalization 
  $\Ytrop$ will only be considered as a topological space 
  (together with an action by $\R_+^*$) and not equipped with a
  structure sheaf.}
Namely, consider the additive monoid $\Rbar:=\R\cup\{+\infty\}$
equipped with the natural topology. For each cone $\sigma\in\Sigma$, let 
$U^{\trop}_\sigma=\Hom(S_\sigma,\Rbar)$ be the set of
monoid homomorphisms, and equip $U_\sigma^{\trop}$ with 
the topology of pointwise convergence.
For example, $U^{\trop}_0=N_\R:=N\otimes_\Z\R\simeq\R^n$.
The space $\Ytrop$ is obtained by gluing together $U_\sigma^{\trop}$
for $\sigma\in\Sigma$ and contains $N_\R$ as an open dense subset. 
It comes with the scaling action by $\R_+^*$ induced by the same
action on $\Rbar$.
For a polarized projective toric variety $Y$, the moment map
gives a homeomorphism of $\Ytrop$ onto the 
moment polytope in $M_\R=M\otimes_\Z\R$.
%
%
%
%
\subsection{Tropicalization}
As in~\S\ref{S101}, let $\YAn$ be the analytification of $Y\times_\Z\C$ 
with respect to the norm $\|\cdot\|$ on $\C$. We have a continuous map
\begin{equation*}
  \trop\colon\YAn\to\Ytrop
\end{equation*}
defined as follows.
Let $\sigma$ be a cone in $\Sigma$. 
A point in $U_\sigma^{\An}$ is a multiplicative seminorm $|\cdot|$ on 
$\C[S_\sigma]$ whose restriction to $\C$ is bounded by $\|\cdot\|$.
In particular, $-\log|\cdot|$ defines a monoid homomorphism
from $S_\sigma$ to $\Rbar$, and hence an element in $U_\sigma^{\trop}$.
It is easy to verify that the maps 
$U_\sigma^{\An}\to U_\sigma^{\trop}$
glue together to a globally defined continuous map 
$\trop\colon\YAn\to\Ytrop$.

Let $\lambda\colon\YAn\to[0,1]$ be the canonical map, and set 
\begin{equation*}
  \YTrop:=[0,1]\times\Ytrop
  \qand
  \Trop:=\lambda\times\trop.
\end{equation*}
This leads to a commutative diagram
\begin{equation*}
  \xymatrix{%
    \YAn \ar[dr]_{\lambda}  \ar[r]^{\Trop} & \YTrop \ar[d]\\
    & [0,1] }
\end{equation*}
where the map $\YTrop\to[0,1]$ is the 
projection onto the first factor.
\begin{Prop}\label{P101}
  For any toric variety $Y$, the map 
  \begin{equation*}
  \Trop\colon\YAn\to\YTrop
  \end{equation*}
  is continuous, proper and surjective; hence it is also closed.
\end{Prop}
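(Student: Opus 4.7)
The proposition splits into three assertions---continuity, surjectivity, and properness---and the closedness statement will fall out for free, since $\YAn$ and $\YTrop$ are locally compact Hausdorff. Continuity is essentially by construction: $\lambda$ is continuous by the definition of the hybrid analytification in~\S\ref{S101}, $\trop$ is continuous by the discussion preceding the statement, and $\Trop=\lambda\times\trop$.

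For surjectivity I plan to argue cone by cone. Given $(\rho,u)\in[0,1]\times U_\sigma^{\trop}$, the monoid homomorphism $u\colon S_\sigma\to\Rbar$ has a ``vanishing face'' $\{m:u(m)=+\infty\}$ cutting out a face $\tau\preceq\sigma$ and hence a torus orbit $O_\tau\subset U_\sigma$. For $\rho>0$ I would pick a closed point $\eta\in O_\tau(\C)$ with $|\chi^m(\eta)|_\infty=e^{-u(m)/\rho}$ for each $m$ with $u(m)<+\infty$ and set $|f|_x:=|f(\eta)|_\infty^\rho$; for $\rho=0$ I would use the Gauss-type seminorm
\begin{equation*}
\Bigl|\sum_m a_m\chi^m\Bigr|_x:=\max\bigl\{e^{-u(m)}\mid a_m\ne 0\bigr\},
\end{equation*}
whose multiplicativity follows immediately from $u$ being a monoid homomorphism and $|\cdot|_0$ being trivial on $\C^\ast$.

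The substance of the proof lies in properness. By the compatibility of the two gluings, $\Trop^{-1}([0,1]\times U_\sigma^{\trop})=U_\sigma^{\An}$ for each $\sigma\in\Sigma$, so it suffices to treat each affine chart separately. Given a compact $K\subset[0,1]\times U_\sigma^{\trop}$, continuity of the evaluation $(\rho,u)\mapsto u(m)$ on $U_\sigma^{\trop}$ shows that $u(m)$ is bounded below on $K$ by some $c_m\in\R$, so every $x\in\Trop^{-1}(K)$ satisfies $|\chi^m|_x\le e^{-c_m}$. Combining this with the Banach bound $|a|_x\le\|a\|$ for $a\in\C$ and the submultiplicative/subadditive structure of $|\cdot|_x$, one bounds $|f|_x$ by a constant $C_f$ depending only on $f$, for every $f\in\C[S_\sigma]$. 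Embedding $U_\sigma^{\An}$ as a closed subspace of $\prod_{f\in\C[S_\sigma]}\R_{\ge 0}$ via $x\mapsto(|f|_x)_f$, the set $\Trop^{-1}(K)$ then lies inside the compact box $\prod_f[0,C_f]$ and is closed there, hence compact by Tychonoff.

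The main obstacle is more organizational than technical. One genuine subtlety is verifying the compatibility $\trop^{-1}(U_\sigma^{\trop})=U_\sigma^{\An}$, which rests on matching up the dual-cone combinatorics of the two gluings. A second is the boundary case in the surjectivity construction when $u$ takes the value $+\infty$ on a nontrivial face: one must be careful to locate $\eta$ in the appropriate torus stratum of $U_\sigma$ rather than in the open torus, which is where the toric structure really enters the argument. Once these points are set up, the three-part proof above is essentially forced by the construction of the hybrid analytification.
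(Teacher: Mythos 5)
Your proposal is correct and follows essentially the same route as the paper: reduce to an affine chart $U_\sigma$, get continuity from the definition, prove surjectivity by the Gauss-type seminorm for $\rho=0$ and evaluation at a suitable closed point (with coordinates $e^{-u(m_i)/\rho}$, which vanish exactly where $u=+\infty$) for $\rho>0$, and deduce properness from Tychonoff after bounding the seminorms of generators. The only cosmetic difference is that the paper checks properness on a basis of compact boxes cut out by generators of $S_\sigma$ rather than on an arbitrary compact $K$.
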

\begin{proof}
  We basically argue as in Lemma~2.1 and~\S3 of~\cite{PayneLimit},
  but include some details as our setting is slightly different.
  The statements to be proved are local on either the source or target, 
  so it suffices to consider the case when $Y=U_\sigma$ is affine.

  In this case, the continuity of the map $\C[S_\sigma]^{\An}\to U_\sigma^{\Trop}$
  is clear from the definition. To prove properness, pick generators 
  $m_1,\dots,m_N$ of the monoid $S_\sigma$. 
  It suffices to prove that if $0\le\rho\le\rho'\le 1$ 
  and $-\infty<s_i\le t_i\le+\infty$ for $1\le i\le N$,
  then the set 
  \begin{equation*}
    W:=\Trop^{-1}\left([\rho,\rho']\times\{v\in U_\sigma^{\trop}\mid 
      s_i\le v(m_i)\le t_i\ \text{for $1\le i\le N$}\}\right)
  \end{equation*}
  is compact in $\C[S_\sigma]^{\An}$.
  Now, the characters
  $z_i:=\chi^{m_i}$, $1\le i\le N$ generate $\C[S_\sigma]$ as a 
  $\C$-algebra; we have 
  \begin{equation*}
    \C[S_\sigma]\simeq\C[z_1,\dots,z_N]/\fa
  \end{equation*}
  for some (monomial) ideal $\fa\subset\C[z_1,\dots,z_N]$.
  Under this identification, $W$ becomes the set of 
  multiplicative seminorms $|\cdot|$ on $\C[z_1,\dots,z_N]$
  whose restrictions to $\C$ are bounded by $\|\cdot\|$, and 
  such that $e^\rho\le|e|\le e^{\rho'}$,
  $e^{-t_i}\le|z_i|\le e^{-s_i}$ for $1\le i\le N$,
  and $|f|=0$ for all $f\in\fa$.
  It is then clear that $W$ is compact, 
  as a consequence of Tychonoff's Theorem.

  Finally, surjectivity can be established as follows. 
  Pick any $(\rho,v)\in U_\sigma^{\Trop}$
  and let $m_i$, $1\le i\le N$, be generators of $S_\sigma$ as before.
  Set $t_i:=v(m_i)\in\Rbar$.

  First suppose $\rho=0$. Define a multiplicative 
  seminorm $|\cdot|$ on $\C[z_1,\dots,z_N]$ by 
  $|\sum_\b a_\b z^\b|=\max\{e^{-\langle t,\b\rangle}\mid a_\b\ne0\}$,
  where $\langle t,\b\rangle=\sum_{i=1}^Nt_i\b_i$.
  This seminorm vanishes on the ideal $\fa$, and hence
  induces a multiplicative seminorm $|\cdot|$ on $\C[S_\sigma]$ 
  whose restriction to $\C$ is the trivial norm. 
  It is then clear that $\Trop(|\cdot|)=(0,v)$.

  Now suppose $0<\rho\le 1$. Let  $\eta\in\Spec\C[z_1,\dots,z_N]$
  be the closed point with coordinates $z_i(\eta)=e^{-t_i}$, $1\le i\le N$,
  and define a multiplicative 
  seminorm $|\cdot|$ on $\C[z_1,\dots,z_N]$
  by $|f|=|f(\eta)|_\infty^\rho$.
  As before, this induces a multiplicative seminorm on 
  $\C[U_\sigma]$ whose restriction to $\C$ is equal to $|\cdot|_\infty^\rho$,
  so $\Trop(|\cdot|)=(\rho,v)$.
\end{proof}
%
%
%
%
\subsection{Proof of Theorem~A$'$}
Let $X$ be a complex algebraic subvariety of $Y\times_\Z\C$.
Then $\XAn$ is a closed subset of $\YAn$.
Let $\Xtrop\subset\Ytrop$ and $\XTrop\subset\YTrop$ be the
images of $\XAn$ under the mappings $\trop$ and $\Trop$,
respectively. 
By Proposition~\ref{P101}, $\XTrop$ is closed in $\YTrop$.
We have a commutative diagram
\begin{equation*}
  \xymatrix{%
    \XAn \ar[dr]_{\lambda}  \ar[r]^{\Trop} & \XTrop \ar[d]^{\pi_1}\\
    & [0,1] }
\end{equation*}
The map $\lambda\colon\XAn\to[0,1]$ is continuous and surjective,
and by Theorem~C it is also open.
The map $\Trop\colon\XAn\to\XTrop$ is surjective by definition and
continuous by Proposition~\ref{P101}. 
It follows from these two properties that 
$\pi_1\colon\XTrop\to[0,1]$ is open and surjective.

Write $\pi_1^{-1}(\rho)=\{\rho\}\times X_\rho^{\trop}$ for 
$0\le\rho\le 1$, where $X_\rho^{\trop}\subset\Ytrop$.  
Lemma~\ref{L101} implies that 
$\rho\mapsto X_\rho^{\trop}$ is continuous.
Theorem~A$'$ will thus follow immediately if we 
can prove that 
$X_0^{\trop}=\Xtrop$ and $X_\rho^{\trop}=\rho\cdot A_X$
for $0<\rho\le1$.

Now, the fiber $\lambda^{-1}(1)$ of $\XAn$ is the analytification
of $X$ with respect to the Archimedean norm $|\cdot|_\infty$ on $\C$.
Hence the fiber $X_1^{\trop}$ of $\XTrop$ is equal to the 
amoeba $A_X$. 
Similarly, for $0<\rho\le1$, $\lambda^{-1}(\rho)$ is the 
analytification of $X$ with respect to the norm $|\cdot|_\infty^\rho$ 
on $\C$, and this implies that $X_\rho$ is the scaled amoeba
$\rho\cdot A_X$ for $0<\rho\le1$.
Finally, the fiber $X_0^{\trop}$ is the image of $\Xan\subset\Yan$
under the tropicalization map $\Yan\to\Ytrop$, where the analytifications
are defined using the trivial norm on $\C$. This image is equal
to the tropicalization $\Xtrop$ of $X$ as defined in~\cite{GublerGuide}.
We should check that this image also agrees with the definition of $\Xtrop$ in
the introduction. On the one hand, the tropicalization does not 
change under non-Archimedean field extensions, 
see~\cite[Prop.~3.7]{GublerGuide}. On the other hand, $\Xan$ may
be viewed as the set of equivalence classes of $L$-valued points, over all
valued field extensions $(L,|\cdot|)$ of $(\C,|\cdot|_0)$, 
see~\cite[3.4.2]{BerkBook}.
This completes the proof.
%
%
%
%
%
%
\section{One-parameter families}\label{S104}
Consider a complex algebraic variety $\cX$ that admits a surjective 
morphism 
\begin{equation*}
  p\colon\cX\to\G_m
\end{equation*}
where $\G_m=\Spec\C[t^{\pm1}]\simeq\C^*$.
We can view $\cX$ as a one-parameter family of complex algebraic
varieties, and we are interested in the behavior as $t\to0$.

As in~\S\ref{S107}, let $\cXAn$ be the Berkovich analytification with respect 
to the norm $\|\cdot\|$ on $\C$, and consider the closed subset 
$\cXsharp\subset\cXAn$ of seminorms for which $|t|=e^{-1}$.
The morphism $p$ gives rise to a continuous surjective map 
$p^{\An}\colon\cXAn\to\G_m^{\An}$ 
that sends $\cXsharp$ to $\G_m^{\sharp}$,
and is equivariant with respect to the continuous maps
$\lambda\colon\cXAn\to[0,1]$ and $\lambda\colon\G_m^{\An}\to[0,1]$.
Write $X_\rho^{\sharp}$ (resp. $\G_{m,\rho}^{\sharp}$) for the fiber 
$\lambda^{-1}(\rho)$ inside $\cXsharp$ (resp. $\G_m^{\sharp}$).

Note that $\G_{m,\rho}^{\sharp}$ consists of all multiplicative 
seminorms $|\cdot|$ on $\C[t^{\pm1}]$ such that $|t|=e^{-1}$ and 
$|a|=|a|_\infty^\rho$ for all $a\in\C^*$. 
In particular, $\G_{m,0}^{\sharp}$ is a singleton, consisting of
the restriction to $\C[t^{\pm1}]$ of the multiplicative non-Archimedean norm
on $\C\lau{t}$ such that $|t|=e^{-1}$ and $|a|=1$ for $a\in\C^*$.
Now let $0<\rho\le 1$. Any seminorm $|\cdot|$ in $\G_{m,\rho}^{\sharp}$
is then of the form $|f|:=|f(a)|_\infty^\rho$ for some $a\in\C^*$,
and the condition $|t|=e^{-1}$ means exactly that $|a|_\infty=e^{-1/\rho}$.
Thus $\G_{m,\rho}^{\sharp}$ is in bijection with the circle of radius $e^{-1/\rho}$ 
in $\C$, so $\G_m^{\sharp}$ can and will be identified with the closed disc $\Delta_{e^{-1}}$ 
of radius $e^{-1}$ in $\C$. Under this identification we have 
\begin{equation*}
  \lambda(a)=\left(\log|a|^{-1}\right)^{-1}
  \quad\text{for $a\in\Delta_{e^{-1}}$}.
\end{equation*}
Write $p^\sharp\colon\cXsharp\to\Delta_{e^{-1}}$ for the restriction 
of $p^{\An}$ to $\cXsharp$, and
$X_a^{\sharp}$ for the fiber above $a\in\Delta_{e^{-1}}$. 
The central fiber $X_0^{\sharp}$ is isomorphic to the
analytification of the base change $\cX\times_{\G_m}\C\lau{t}$,
with respect to the non-Archimedean norm on $\C\lau{t}$.
Any other fiber $X_a^{\sharp}$, $0<|a|\le e^{-1}$, is homeomorphic to the 
fiber above $t=a$ of the complex analytic space $\cX^h$.
\begin{ThmCp}
  For $0<\delta\ll1$, the
  map $p^\sharp\colon\cXsharp\to\Delta_{e^{-1}}$ is open above $\Delta_\delta$.
\end{ThmCp}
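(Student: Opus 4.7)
The plan is to adapt the strategy of the proof of Theorem C to the relative setting, treating separately points of $\cXsharp$ above $\Delta_\delta^\ast$ and above $0$. Above $\Delta_\delta^\ast$, the trivialization $\lambda^{-1}(\,]0,1])\simeq\,]0,1]\times\cX^h$ identifies $p^\sharp$ with the restriction of the holomorphic map $p^h\colon\cX^h\to\C^\ast$. Since only finitely many irreducible components of $\cX$ fail to dominate $\G_m$, and each such component maps to a single point of $\C^\ast$, for $0<\delta\ll1$ small enough no such component meets $p^{-1}(\Delta_\delta^\ast)$; the dominant components give open maps onto a neighborhood of $0$. Hence the content of Theorem C$'$ is openness of $p^\sharp$ at points $x\in X_0^\sharp$.

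For $x\in X_0^\sharp$, I would first apply Lemma~\ref{L104} to $X_0^\sharp$, which is the analytification of $\cX\times_{\G_m}\Spec\C\lau{t}$ over the non-Archimedean field $\C\lau{t}$; since $\sqrt{|\C\lau{t}^\ast|}=e^\Q$ has infinite $\Q$-codimension in $\R_+^\ast$, the points of maximal $\C\lau{t}$-rational rank $n-1$ are dense in $X_0^\sharp$, where $n:=\dim\cX$. By continuity, it suffices to establish openness at such points. Such a point, viewed as a point of $\bar\cX^{\an}$ for a compactification $\bar\cX\supset\cX$ over $\A^1$ analytified with respect to the trivial norm on $\C$, has maximal $\C$-rational rank $t(x)=n$, since the constraint $v(t)=1$ contributes the missing direction in the value group. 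Applying Lemma~\ref{L103} to $\bar\cX$ at $x$ then produces a surjective birational morphism $\phi\colon\cY\to\bar\cX$ with $\cY$ smooth, together with a monomial point $y\in\cY^{\an}$ satisfying $\phi^{\an}(y)=x$. Concretely, $y$ corresponds to a monomial valuation at a closed point $\xi\in\phi^{-1}(\{t=0\})$ in local coordinates $z_1,\dots,z_n$, with weights $\a_i>0$ linearly independent over $\Q$ and satisfying $v(t)=1$. Since $\phi^{\An}$ is continuous and surjective, it suffices to verify the analogue of property $(\star)$ at $y$: for every neighborhood $U$ of $y$ in $\cY^{\sharp}$, $p^\sharp(U)$ should contain a neighborhood of $0$ in $\Delta_{e^{-1}}$.

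This last step amounts to a direct computation parallel to \S\ref{S108}. Expand $t=\sum_m a_m z^m$ in $\widehat{\cO_{\cY,\xi}}$; by $\Q$-independence of the $\a_i$, the minimum $\langle m,\a\rangle=v(t)=1$ is attained at a unique index $\bar m$ with $a_{\bar m}\ne 0$, and after reordering we may assume $\bar m_1>0$. For each $a\in\C^\ast$ with $|a|$ small, set $\rho:=(\log|a|^{-1})^{-1}$ and apply a Weierstrass preparation / implicit function argument to produce a complex point $\eta_a\in\cY^h$ close to $\xi$ with $|z_i(\eta_a)|_\infty=e^{-\a_i/\rho}$ for $i\geq 2$ and $t(\eta_a)=a$; solving for $z_1$ gives $|z_1(\eta_a)|_\infty=e^{-\a_1/\rho}(1+O(e^{-c/\rho}))$ for some $c>0$. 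The seminorm $y_a\in\lambda^{-1}(\rho)$ defined by $|f|_{y_a}:=|f(\eta_a)|_\infty^\rho$ satisfies $|t|_{y_a}=|a|^\rho=e^{-1}$, so $y_a\in\cY^{\sharp}$ with $p^\sharp(y_a)=a$. The principal obstacle, and the heart of the proof, is to show that $y_a\to y$ in $\cY^{\An}$ as $a\to 0$, for then $y_a\in U$ for $|a|$ sufficiently small and hence $p^\sharp(U)\supseteq\{a:|a|<\delta'\}$ for some $\delta'>0$. This is accomplished by repeating the estimates \eqref{e101}--\eqref{e104} of \S\ref{S108} on basic neighborhoods $U^\pm(f,t)$ of $y$; the rational independence of the $\a_i$ isolates the dominant monomial in any local expansion, and the $O(e^{-c/\rho})$ correction to $|z_1(\eta_a)|_\infty$ is absorbed into the error terms without affecting the argument.
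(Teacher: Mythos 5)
Your proposal is correct and its overall architecture coincides with the paper's: handle the punctured disc by openness of the holomorphic map $p^h$ (the paper invokes generic flatness plus Douady's theorem that flat maps of complex spaces are open; your component-by-component version amounts to the same thing, since a reduced irreducible variety dominating a smooth curve is flat over it --- you should still cite Douady for why flatness gives analytic openness), then treat points of $X_0^\sharp$ by Lemma~\ref{L104}, pass to a compactification so that the valuation acquires a center, resolve, and rerun the estimates of \S\ref{S108}. The one genuine divergence is the local model at the center $\xi$. The paper performs a further blowup (citing \cite[Prop.~3.7]{jonmus} or \cite[Rmk.~3.8]{siminag}) to put $t$ in monomial form $t=u\prod_i z_i^{b_i}$ with $u$ a unit; the approximating set $Z_a$ is then essentially a torus $\{|z_i|=e^{-\a_i/\rho}\}$ inside the fiber over $a$, and both its nonemptiness and its convergence to $x$ fall out of the \S\ref{S108} estimates with no extra work. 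You instead stop at Lemma~\ref{L103}, keep $t$ as a general power series with a unique $\a$-dominant monomial $z^{\bar m}$, and solve $t=a$ for $z_1$ by a Weierstrass/implicit-function argument. This is a viable substitute: it trades the monomialization citation for the fixed-point step, and the rational independence of the $\a_i$ still isolates the dominant term. Two small corrections: first, the solution satisfies $|z_1(\eta_a)|_\infty=|a_{\bar m}|^{-1/\bar m_1}\,e^{-\a_1/\rho}\bigl(1+O(e^{-c/\rho})\bigr)$, i.e.\ there is a bounded constant factor you omitted; it is harmless because it gets raised to the power $\rho\to0$ in $y_a$ and, for general $f$, contributes a factor of the form $C^{|m|}$ per monomial, which is absorbed into the $R^{|m|}$ bound of \eqref{e101} --- but it should be acknowledged. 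Second, you should note that the points $\eta_a$ must land in $\phi^{-1}(\cX)$ rather than in the boundary of the compactification; since the boundary is a proper analytic subset near $\xi$, one has enough freedom in the arguments of the $z_i(\eta_a)$ to arrange this (the paper's $Z_a$ faces, and silently resolves, the same issue).
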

\begin{Rmk}
  One can check that when $\cX=\G_m\times X$ is a product, 
  Theorem~C$'$ implies Theorem~C in the introduction.
\end{Rmk}
\begin{proof}
  Using Hironaka's theorem on resolution of singularities, we can find 
  a proper and surjective birational morphism $\cY\to\cX$, with $\cY$ smooth.
  Then $p^\sharp\colon\cY^{\sharp}\to\Delta_{e^{-1}}$ factors through a continuous
  surjective map $\cY^{\sharp}\to\cX^{\sharp}$.
  Hence, if $\cY^{\sharp}\to\Delta_\delta$ is open for some $\delta\in(0,1)$, 
  then so is $\cX^{\sharp}\to\Delta_\delta$. 
  We may therefore assume that $\cX$ is smooth.

  There exists a finite subset $A\subset\G_m$ 
  such that $p\colon\cX\to\G_m$ is flat above $\G_m\setminus A$,
  see for example~\cite[Ch.~III, Prop.~9.7]{Hartshorne}.
  By~\cite[Corollary, p.~73]{Douady} this implies that 
  $p^h\colon\cX^h\to\C^*$, the analytification of $p$ with respect to 
  $|\cdot|_\infty$, is open above $\C^*\setminus A$.
  Pick $\delta>0$ small enough so that $|a|>\delta$ for all $a\in A$.
  Then $p^\sharp\colon\cX^\sharp\to\Delta_{e^{-1}}$ is open above 
  $\Delta_\delta\setminus\{0\}$.
  It remains to see that $p^\sharp$ is open also at points on the 
  non-Archimedean fiber $X^\sharp_0$.

  By the Nagata compactification theorem (see~\cite{ConradNagata}) 
  there exists a proper complex algebraic variety $\ocX$,
  and an open immersion $\cX\hookrightarrow\ocX$, with dense image,
  such that $p$ extends to a proper morphism $p\colon\ocX\to\P^1$.
  Using resolution of singularities, we may assume that $\ocX$ is smooth.
  Again by~\cite[Ch.~III, Prop.~9.7]{Hartshorne},  
  $p\colon\ocX\to\P^1$ is automatically flat above $\P^1\setminus A$. 

  The general properties of the analytification functor imply that 
  $\cX^{\An}$ is an open subset of $\ocX^{\An}$.
  We need to show that if $x\in X^{\sharp}_0$ and $U$ is a neighborhood of
  $x$ in $\cX^{\sharp}$, then $p(U)$ is a neighborhood of $0$ in $\Delta_{e^{-1}}$.
  Since the $\Q$-vector space $\sqrt{|\C\lau{t}^\ast|}$ is of dimension one,
  Lemma~\ref{L104} applies. We may therefore assume that $x$ is a point of 
  maximal rational rank, $t(x)=n$, since such points are dense in
  $X^{\sharp}_0$.
  The divisible value group $\sqrt{|\cH(x)^\ast|}$ of $x$ is
  a $\Q$-vector space of dimension $n+1$; hence $x$ defines
  an Abhyankar valuation of the function field of $\cX$ of 
  rational rank $n+1$. 
  That advantage of having $\ocX$ proper and smooth is now that this
  valuation admits a unique \emph{center} on $\ocX$, 
  as a consequence of the valuative criterion of
  properness. The center is a point $\xi\in\ocX_0$ such that the valuation
  is nonnegative on the local ring $\cO_{\ocX,\xi}$ and strictly positive on the maximal ideal.

  Using~\cite[Proposition~3.7]{jonmus}
  or~\cite[Remark~3.8]{siminag} we may, after a suitable blowup of
  $\ocX$ above $0\in\P^1$, assume that there exist
  local coordinates $z_1,\dots,z_{n+1}$
  at $\xi$, positive integers $b_1,\dots,b_{n+1}$ and rationally
  independent positive real numbers $\a_1,\dots,\a_{n+1}$ such that 
  $t=u\prod_{i=1}^{n+1}z_i^{b_i}$, with $u$ a unit in $\cO_{\ocX,\xi}$, 
  and the point $x$ defines a monomial valuation $v$ on $\cO_{\ocX,\xi}$
  in these coordinates, with values $v(z_i)=\a_i$ for $1\le i\le n+1$. 
  In particular, $\sum_{i=1}^{n+1}b_i\a_i=v(t)=1$.

  For $0<|a|\ll1$, set
  \begin{equation*}
    Z_a:=X^{\sharp}_a\cap\{|z_i|=e^{-\a_i}\ \text{for $1\le i\le n+1$}\}.
  \end{equation*}
  The same type of estimates as in the proof of Theorem~C now show 
  that any open neighborhood $U$ of $x$ in $\cXsharp$ will contain 
  $Z_a$ for $0<|a|\ll1$. 
  Indeed, we may assume $U=\{|f|>t\}$ or $U=\{|f|<t\}$,
  where $t>0$ and $f\in\cO_{\ocX,\xi}$.
  This proves that $p^\sharp(U)$ is an open neighborhood of
  $0$ in $\Delta_\delta$,  as was to be shown.
\end{proof}
%
%
%
%
\subsection{Proof of Theorem~B}
The product $\G_m\times Y$
is a toric variety, and we have $(\G_m\times Y)^{\trop}=\R\times\Ytrop$.
The image of $(\G_m\times Y)^{\sharp}$ in 
$(\G_m\times Y)^{\trop}$ is given by 
\begin{equation*}
  \trop((\G_m\times Y)^{\sharp})
  =\{1\}\times\Ytrop
  \simeq\Ytrop.
\end{equation*}
Via the identification $\G_m^{\sharp}\simeq\Delta:=\Delta_{e^{-1}}$ above,
this induces a commutative diagram
\begin{equation*}
  \xymatrix{%
    (\G_m\times Y)^{\sharp} \ar[dr]_{p}  \ar[r]^{p\times\trop} 
    & \Delta\times\Ytrop\ar[d]\\
    & \Delta }
\end{equation*}

Now suppose $\cX$ is a closed subvariety of 
$(\G_m\times Y)\times_\Z\C\simeq\C^*\times(Y\times_\Z\C)$ 
such that the projection of 
$\cX$ to $\C^*$ is surjective.
Let $\cX^{\dagger}\subset\Delta\times\Ytrop$ be 
the image of $\cX^{\sharp}$ under $p\times\trop$.
Its fiber over $a\in\Delta$ is then equal to 
\begin{equation*}
  X^{\dagger}_a:=\trop(X^{\sharp}_a),
\end{equation*}
where, again, $X^{\sharp}_a=p^{-1}(a)$.
If $a\ne 0$, then $X^{\dagger}_a=\lambda(a)\cdot A_{X_a}$.
On the other hand, $X^{\dagger}_0$ is equal to $\cX^{\trop}$, the 
image of $X^{\sharp}_0$ in $\Ytrop$.
This completes the proof.
%
%
%
%
%
%

\end{document}